\newcommand {\C} {{\mathbb C}}
\newcommand {\R} {{\mathbb R}}
\newcommand {\Z} {{\mathbb Z}}
\newcommand {\Q} {{\mathbb Q}}
\newcommand {\cH} {{\mathcal H}}
\newcommand {\cE} {{\mathcal E}}
\newcommand {\cM} {{\mathcal M}}
\newcommand {\cY} {{\mathcal Y}}
\newcommand {\cO} {{\mathcal O}}
\newcommand {\D} {\mathbb{D}}
\newcommand{\tX}{\tilde{X}}
\newcommand{\cD}{\mathcal{D}}
\newcommand{\cX}{\mathcal{X}}
\newcommand{\bR}{\textbf{R}}
\newcommand{\Omp}{\underline{\Omega}_{X}^{p}}
\DeclareMathOperator{\im}{im}
\DeclareMathOperator\supp{supp}
\newcommand{\bT}{\begin{tikzcd}}
\newcommand{\eT}{\end{tikzcd}}
\DeclareMathOperator\depth{depth}
\DeclareMathOperator\lcd{lcd}
\DeclareMathOperator\lcdef{lcdef}
\DeclareMathOperator\codim{codim}
\newtheorem{thm}[subsection]{Theorem}
\newtheorem{cor}[subsection]{Corollary}
\newtheorem{lemma}[subsection]{Lemma}
\newtheorem{prop}[subsection]{Proposition}
\newtheorem{defn}[subsection]{Definition}
\newtheorem{rmk}[subsection]{Remark}
\begin{document}
\author{ Scott Hiatt }
\date{\today}
   \address{
 Department of Mathematics\\
  University of Wisconsin\\
  Madison, WI 53706\\
  U.S.A.}
  \email{shiatt@wisc.edu}

 \title{Differential forms on varieties with pre-$k$-rational singularities}

\maketitle

\begin{abstract}
 Let $X$ be a complex algebraic variety. With $\Q$-coefficients, the compactly supported cohomology groups $H^{i}_{c}(X, \Q)$ and the compactly supported intersection cohomology groups $IH^{i}_{c}(X, \Q)$ have mixed Hodge structures. We compare these two mixed Hodge structures for varieties with pre-$k$-rational singularities. We then study various notions of differential forms on varieties with pre-$k$-rational singularities. In particular, we investigate the depth of the complex $\Omp$,  where $\Omp$ is the $p^{th}$-graded piece of the Du Bois complex. 
\end{abstract}


\section*{Introduction}

Following \cite{FrLa} \cite{MuOlPoWi} \cite{JuKiSaYo}, when a complex algebraic variety $X$ is a local complete intersection, the variety is said to have $k$-Du Bois singularities if the canonical morphism
$$\Omega^{p}_{X} \rightarrow \underline{\Omega}^{p}_{X}$$
is a quasi-isomorphism for $0 \leq p \leq k,$ where $\Omp$ is the $p^{th}$-graded piece of the Du Bois complex. $X$ is said to have $k$-rational singularities if the canonical morphism
$$\Omega^{p}_{X}\rightarrow \D(\underline{\Omega}^{n-p}_{X})[-\dim X]$$
is a quasi-isomorphism for $0 \leq p \leq k$, where $\D = \bR \cH om_{X}(\bullet, \omega_{X}^{\bullet}).$ For general quasi-projective varieties, we will follow the definitions given by \cite{shenVenVo} for higher singularities on algebraic varieties.

\begin{defn}
$X$ has $k$-Du Bois singularities if it is seminormal and
\begin{enumerate}
    \item $\codim_{X}(S) \geq 2k+1$, where $S$ is the singular locus;
    \item $X$ has pre-$k$-Du Bois singularities. That is,
    $$\cH^{i}(\underline{\Omega}^{p}_{X}) = 0 \quad \text{for $i>0$ and $0 \leq p \leq k.$}$$
    \item $\cH^{0}(\Omp)$ is reflexive, for all $p \leq k.$
\end{enumerate}
\end{defn}

\begin{defn}
    $X$ is said to have $k$-rational singularities if it is normal and 
    \begin{enumerate}
    \item $\codim_{X}(S) \geq 2k+2$, where $S$ is the singular locus;
    \item $X$ has pre-$k$-rational singularities. That is,
    $$\cH^{i}(\D(\underline{\Omega}^{n-p}_{X})[-\dim X])= 0 \quad \text{for $i>0$ and $0 \leq p \leq k.$}$$
\end{enumerate}
\end{defn}
Note that when $k = 0$, we receive the commonly known definitions for Du Bois and rational singularities, respectively. It was shown in \cite{shenVenVo} that all the respective definitions agree when $X$ is a local complete intersection. Furthermore, for normal varieties, the following implications were shown
$$\bT \text{$k$-rational singularities} \arrow[r, Rightarrow] \arrow[d, Rightarrow] & \text{$k$-Du Bois singularities} \arrow[d, Rightarrow] \\
\text{pre-$k$-rational singularities} \arrow[r, Rightarrow] & \text{pre-$k$-Du Bois singularities.} \eT$$


In this paper, we investigate how pre-$k$-rational singularities affect the mixed Hodges structures of $H^{i}_{c}(X, \Q)$ and $IH^{i}_{c}(X, \Q):= H^{i-\dim X}_{c}(X, IC_{X}(\Q))$ and extend some of the results given in \cite{AH2}. When $X$ has rational singularities, it was shown in \cite{AH2} the natural maps
 $$Gr^{0}_{F}H^{i}_{c}(X, \C) \rightarrow Gr^{0}_{F}IH^{i}_{c}(X, \C)  $$
$$Gr^{i}_{F}H^{i}_{c}(X, \C) \rightarrow Gr^{i}_{F}IH^{i}_{c}(X, \C) $$
are isomorphisms for all $i$. In this paper, we prove the following generalization.

\begin{thm}\label{Thm1}
     If $X$ is an irreducible, normal, quasi-projective variety with pre-$k$-rational singularities, then for all $i$, the natural maps
    $$Gr^{p}_{F}H^{i}_{c}(X, \C) \rightarrow Gr^{p}_{F}IH^{i}_{c}(X, \C)$$
    $$Gr^{i-p}_{F}H^{i}_{c}(X, \C) \rightarrow Gr^{i-p}_{F}IH^{i}_{c}(X, \C)$$
   are isomorphisms for $0 \leq p \leq k$. In particular, the natural map of mixed Hodge structures $H^{i}_{c}(X, \Q) \rightarrow IH^{i}_{c}(X,\Q)$ is an isomorphism for $i \leq 2k+1$ and injective for $i = 2k+2.$
\end{thm}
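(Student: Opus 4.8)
The plan is to compare the Du Bois complex of $X$ with the intersection complex via a filtered version of the comparison map, exploiting the hypothesis that $X$ has pre-$k$-rational singularities. Recall that by Saito's theory, $IC_X(\Q)$ underlies a pure Hodge module, and there is a natural morphism $\Q_X^H \to IC_X^H(\Q)[-\dim X]$ of mixed Hodge modules (after the appropriate shift), which on the level of graded de Rham / Du Bois data induces the maps in question. Concretely, the graded pieces $Gr^p_F H^i_c(X,\C)$ are computed (up to reindexing) by the hypercohomology $\HH^{i-p}_c(X, \Omp)$, while $Gr^p_F IH^i_c(X,\C)$ is computed by the corresponding graded piece of $\mathrm{DR}$ applied to $IC_X$, which by the characterization of $k$-rational singularities is (up to shift and duality) $\bR\cH om_X(\underline{\Omega}_X^{n-p}, \omega_X^\bullet)$. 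The pre-$k$-rational hypothesis says precisely that for $0 \le p \le k$ this dual complex is concentrated in degree $-\dim X$, i.e.\ $\D(\underline{\Omega}_X^{n-p})[-\dim X] \simeq \cH^0$ of itself, and dually one wants the comparison morphism $\Omp \to \D(\underline{\Omega}_X^{n-p})[-\dim X]$ to be a quasi-isomorphism on the nose for these $p$.

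First I would set up, for each $0 \le p \le k$, the commutative square relating the comparison morphism $\Omega^p_X \to \Omp$, the morphism $\Omp \to \D(\underline{\Omega}_X^{n-p})[-\dim X]$, and the corresponding pieces for the intersection complex; the key input is that on an irreducible normal variety with pre-$k$-rational singularities, $\Omp$ already agrees with the relevant graded piece of $\mathrm{DR}(IC_X)$ for $p \le k$, so taking (compactly supported) hypercohomology yields the first family of isomorphisms $Gr^p_F H^i_c \xrightarrow{\sim} Gr^p_F IH^i_c$. Then I would deduce the second family, involving $Gr^{i-p}_F$, by Poincaré--Verdier duality: $IH^i_c(X,\C)$ is dual to $IH^{2n-i}(X,\C)$ with a Tate twist swapping $Gr^p_F$ and $Gr^{n-p}_F$, and similarly $H^i_c(X,\C)$ pairs against $H^{2n-i}(X,\C)$ on the normal (hence in particular on the smooth locus the duality is perfect) locus; reindexing $i \mapsto 2n-i$ and $p$ appropriately converts the statement about $Gr^p_F$ of one into $Gr^{i-p}_F$ of the other. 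One must be slightly careful that $H^\bullet_c$ of a singular variety is not self-dual, so this step really uses the comparison with $IH$ already established in low weight/Hodge degree together with the self-duality of $IH$.

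For the final clause, I would argue degree by degree. The cone $C$ of $H^i_c(X,\Q) \to IH^i_c(X,\Q)$ has a mixed Hodge structure whose graded pieces $Gr^p_F$ vanish for $0 \le p \le k$ and — using the second family of isomorphisms, i.e.\ the symmetric range $i - k \le p \le i$ — also vanish for $p$ in the top range. When $i \le 2k+1$ these two ranges $[0,k]$ and $[i-k, i]$ together cover all of $[0,i]$ (since $i - k \le k+1$), so all Hodge-graded pieces of $C^i$ and $C^{i+1}$ vanish, forcing $C$ to be acyclic in degrees $i$ and $i+1$ and hence $H^i_c \to IH^i_c$ to be an isomorphism; when $i = 2k+2$ the ranges $[0,k]$ and $[k+2, 2k+2]$ miss exactly $p = k+1$, so only the middle Hodge piece of $C^{i}$ can survive, which still gives injectivity of $H^{2k+2}_c \to IH^{2k+2}_c$ (the relevant long exact sequence then shows the map is injective, as $C^{i-1}=0$ kills the possible cokernel contribution in the previous degree).

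The main obstacle I anticipate is making the duality bookkeeping in the second paragraph fully rigorous: one has to match up the Hodge filtration on $\mathrm{DR}(IC_X)$ (Saito's convention) with the naive Hodge filtration computing $Gr^p_F H^i_c$ via the Du Bois complex, track all the shifts by $\dim X$ and Tate twists correctly, and ensure that the comparison morphism $\Omp \to \D(\underline{\Omega}_X^{n-p})[-\dim X]$ being an isomorphism for $p \le k$ (which is what ``pre-$k$-rational'' buys, together with normality) genuinely translates into an isomorphism of the associated hypercohomology groups compatible with the mixed Hodge structures — i.e.\ that the comparison map at the level of cohomology is the one induced by the canonical morphism of Hodge modules. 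Once that dictionary is in place, the cohomological vanishing and the counting argument for the final clause are routine.
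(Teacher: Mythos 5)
There is a genuine gap at the foundation of your argument: the identification of $Gr^{p}_{F}H^{i}_{c}(X,\C)$ with $\HH^{i-p}_{c}(X,\Omp)$ (and of $Gr^{p}_{F}IH^{i}_{c}$ with the compactly supported hypercohomology of the graded de Rham piece of $IC^{H}_{X}$) is false for non-proper $X$, and this is exactly the difficulty the actual proof is organized around. The Hodge filtration on $H^{i}_{c}$ is defined through a compactification, via $F_{-p}DR(i_{X!}\cM)$ on a smooth projective ambient variety; the ``naive'' filtration built from $F_{-p}DR(\cM)$ on $X$ itself (the de Rham filtration $^{d}F$ of the paper) is a priori different, its spectral sequence with $E_{1}$-terms $H^{\bullet}_{c}(X,Gr^{F}_{-p}DR(\cM))$ need not degenerate, and those $E_{1}$-terms are typically not even finite-dimensional (already $H^{1}_{c}(\AA^{1},\cO_{\AA^{1}})$ is infinite-dimensional, while every $Gr^{p}_{F}H^{i}_{c}(\AA^{1},\C)$ has dimension at most one). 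So the quasi-isomorphism $\Omp\simeq I\Omp$ for $p\le k$ does not, by itself, yield the first family of isomorphisms. The paper bridges the two filtrations by passing to the cone $\mathcal{K}$ of $\Q^{H}_{X}[n]\to IC^{H}_{X}$: since $Gr^{F}_{p}DR(\mathcal{K})=0$ for $p>0$, one gets $^{d}F^{0}=F^{0}=H^{\bullet}_{c}(X,DR(\mathcal{K}))$ and hence surjectivity of $^{d}Gr^{0}_{F}\to Gr^{0}_{F}$; then an induction on $k$, using that $\Omega^{p}(\mathcal{K})\simeq 0$ for $p\le k$ kills the de Rham $E_{1}$-page in that range, propagates the vanishing to the genuine Hodge graded pieces $Gr^{p}_{F}H^{\bullet}_{c}(X,DR(\mathcal{K}))$. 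Some argument of this kind is indispensable; you flag the issue as ``bookkeeping'' in your last paragraph, but it is the substantive content of the theorem.

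Your route to the second family is also not the right mechanism. Poincar\'e--Verdier duality does not pair $H^{i}_{c}(X,\C)$ with $H^{2n-i}(X,\C)$ for singular $X$, and invoking the already-established comparison with $IH$ ``together with self-duality of $IH$'' cannot produce control of $Gr^{i-p}_{F}H^{i}_{c}$, since that is precisely the part of $H^{i}_{c}$ not yet compared with anything self-dual. What the paper uses instead is internal to the cone: the weight bound $Gr^{W}_{w}H^{i-n}_{c}(X,DR(\mathcal{K}))=0$ for $w>i$, combined with the conjugation symmetry of each pure piece ($Gr^{a}_{F}Gr^{W}_{w}=0$ iff $Gr^{w-a}_{F}Gr^{W}_{w}=0$), converts the vanishing in Hodge degrees $[0,k]$ into vanishing in degrees $\ge w-k$, hence in $[i-k,i]$. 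Your counting argument for the final clause (the ranges $[0,k]$ and $[i-k,i]$ cover $[0,i]$ when $i\le 2k+1$, and injectivity at $i=2k+2$ comes from vanishing of the cone's cohomology one degree lower) coincides with the paper's and is fine once the two vanishing families are correctly established.
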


We apply the previous theorem to study the depth of the complex $\Omp$. If $(R,m)$ is the local ring for a closed point $x \in X$ and $M$ is an element of the bounded derived category of $R$-modules, we define 
$$\depth(M) := \min\{i \hspace{.03in} | \hspace{.03in} \cH^{-i}(\bR H om_{R}(M, \omega_{R}^{\bullet}))\neq 0\} = \min \{i \hspace{.03in} | \hspace{.03in} \cH^{i}(\bR \Gamma_{m}(M)) \neq 0 \}.$$ 
The depth of $\Omp$ is given by
$$\depth(\underline{\Omega}^{i}_{X}):= \min_{x \in X}\depth((\underline{\Omega}^{i}_{X})_{x}),$$
where $x \in X$ is any closed point. If $X$ is irreducible and has rational singularities, it was also shown in \cite{AH2},
$$\depth(\Omp) \geq 2 \quad \text{for $0 \leq p \leq \dim(X),$}$$
and the map
$$\cH^{i}(\Omp) \rightarrow  \cH^{i}(I\Omp)$$
is an isomorphism for $i = 0$ and injective for $i =1$ for $0 \leq p \leq \dim(X).$ Here we have
$$I\Omp:= Gr^{F}_{-p}DR(IC^{H}_{X})[p-n] \in D^{b}_{coh}(\cO_{X}),$$
where $IC^{H}_{X}$ is the pure Hodge module on $X$ whose de Rham complex is given by the intersection complex. Note, the relationship between the complexes $\Omp$, $I\Omp$, and $\D(\underline{\Omega}^{n-p}_{X})[-\dim X]$ is given by the natural maps
$$\Omp \rightarrow I\Omp \rightarrow \D(\underline{\Omega}^{n-p}_{X})[-\dim X].$$
In this paper, we prove the following generalization of the result of \cite{AH2}.

\begin{thm}\label{Thm2}
   Assume $X$ is an irreducible, normal, quasi-projective variety. If $X$ has pre-$k$-rational singularities for $k \leq \dim X -2$, then
    $$\depth(\underline{\Omega}^{p}_{X}) \geq k+2  \quad \text{for $0 \leq p \leq \dim(X)$}.$$
    Furthermore, for $0 \leq p \leq \dim(X)$, the map
    $$\cH^{i}(\Omp) \rightarrow \cH^{i}(I\Omp)$$
    is an isomorphism for $i \leq k$ and injective for $i = k+1$. 
\end{thm}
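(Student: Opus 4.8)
The plan is to derive both conclusions from Theorem \ref{Thm1} by a local-to-global duality argument, closely following the strategy of \cite{AH2} but tracking the pre-$k$-rational hypothesis through the range $0 \le p \le k$ and its consequences for all $p$. First I would recall the two defining exact triangles relating $\Omp$, $I\Omp$, and $\D(\underline{\Omega}^{n-p}_{X})[-\dim X]$: the natural maps $\Omp \to I\Omp \to \D(\underline{\Omega}^{n-p}_{X})[-\dim X]$ fit into triangles whose cones are supported on the singular locus $S$. Under Grothendieck--Serre duality, the depth statement $\depth(\underline{\Omega}^{p}_{X}) \ge k+2$ is equivalent to the vanishing $\cH^{j}(\D(\underline{\Omega}^{p}_{X})[-\dim X]) = 0$ for $j$ in a suitable range below $\dim X - (k+2)$; so the first task is to translate the depth bound into a cohomological vanishing for the dual complex, and then relate the dual of $\underline{\Omega}^{p}_{X}$ to $\underline{\Omega}^{n-p}_{X}$-type data on which the pre-$k$-rational hypothesis gives information.

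Next I would globalize: cover $X$ (or rather, use that $X$ is quasi-projective, so it admits a compactification and one can compute $H^{i}_{c}$) and use the local cohomology characterization $\depth(M) = \min\{i : \cH^{i}(\bR\Gamma_{m}(M)) \ne 0\}$ together with the fact that depth can be detected after passing to a suitable affine chart or a general complete intersection curve/surface section. The mechanism of \cite{AH2} is that the isomorphisms on $Gr^{p}_{F}$ and $Gr^{i-p}_{F}$ of compactly supported (intersection) cohomology from Theorem \ref{Thm1}, fed through the Hodge-theoretic identification $Gr^{F}_{-p}DR(\text{Hodge module}) = \Omp$ resp. $I\Omp$ up to shift, force the cohomology sheaves $\cH^{i}(\Omp) \to \cH^{i}(I\Omp)$ to be isomorphisms in low degrees; one runs the long exact sequence of the triangle $\Omp \to I\Omp \to (\text{cone})$ and bounds the cone using that its hypercohomology contributes to the failure of the $H^{i}_{c} \to IH^{i}_{c}$ map, which Theorem \ref{Thm1} controls up to $i = 2k+2$. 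Concretely: the cone $C^{p}$ of $\Omp \to I\Omp$ is a complex on $S$, and a spectral sequence / weight argument shows $\cH^{i}(C^{p}) = 0$ for $i \le k$ and the map out of $\cH^{k+1}$ is injective, giving exactly the second assertion; then the depth bound $\depth(\underline{\Omega}^{p}_{X}) \ge k+2$ follows because $I\Omp$ (coming from a pure Hodge module, whose de Rham graded pieces are Cohen--Macaulay up to the relevant shift by the structure theory of pure Hodge modules / Saito's results) already has the requisite depth, and agreement with $\Omp$ through degree $k$ plus injectivity in degree $k+1$ transfers the bound.

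For the extension to all $p$ with $0 \le p \le \dim X$ (not just $p \le k$), I would use the symmetry $p \leftrightarrow n-p$ built into the duality: $\D(\underline{\Omega}^{n-p}_{X})[-n]$ for $p \le k$ controls, via Theorem \ref{Thm1}'s second family of isomorphisms on $Gr^{i-p}_{F}$, the complementary range, and the codimension hypothesis $k \le \dim X - 2$ ensures the relevant triangles have cones in high enough codimension that the depth estimate propagates. The main obstacle I anticipate is the bookkeeping at the boundary degree $i = k+1$: showing injectivity there (rather than just an isomorphism in lower degrees) requires pinning down precisely which graded piece of which weight filtration the obstruction class lives in, and checking that Theorem \ref{Thm1}'s injectivity at $i = 2k+2$ — not an isomorphism — is exactly what is needed, with no loss. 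A secondary technical point is verifying that $I\Omp$ genuinely has depth $\ge k+2$ for \emph{all} $p$, which should follow from the fact that $IC^{H}_{X}$ is a pure Hodge module and hence its de Rham complex graded pieces satisfy the expected Cohen--Macaulay-type bounds, but the exact shift conventions must be handled with care.
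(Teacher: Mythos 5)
Your architecture matches the paper's in outline: the proof does run the long exact sequence of the triangle $\Omp \to I\Omp \to \Omega^{p}(\mathcal{K})$, does establish $\depth(I\Omp)\ge k+2$ for all $p$ from the self-duality $\bR\cH om_{\cO_X}(I\Omp,\omega^{\bullet}_{X})\simeq I\underline{\Omega}^{n-p}_{X}[n]$ coming from purity of $IC^{H}_{X}$, and does feed in Theorem \ref{Thm1} through the vanishing $Gr^{p}_{F}H^{k+p-n}_{c}(X,DR(\mathcal{K}))=0$ for all $p$ (which is the second family of graded isomorphisms applied for all $i$ --- not the injectivity of $H^{2k+2}_{c}\to IH^{2k+2}_{c}$ that you flag as the delicate point). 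But there is a genuine gap at the central step, which you dispose of with ``a spectral sequence / weight argument shows $\cH^{i}(C^{p})=0$ for $i\le k$'': Theorem \ref{Thm1} only gives vanishing of a \emph{global} invariant (a graded piece of compactly supported hypercohomology), and vanishing of hypercohomology does not imply vanishing of the cohomology sheaves of the cone. Bridging this global-to-local gap is most of the work. The paper does it by a double induction (on $k$ and on $\dim X$), reducing to $X$ affine: the induction on $k$ kills $\cH^{i}(\Omega^{p}(\mathcal{K}))$ for $i\le k-1$, so that $H^{k}_{c}(X,\Omega^{p}(\mathcal{K}))\cong\Gamma_{c}(X,\cH^{k}(\Omega^{p}(\mathcal{K})))$ and $H^{k}(X,\Omega^{p}(\mathcal{K}))\cong\Gamma(X,\cH^{k}(\Omega^{p}(\mathcal{K})))$; the composite of the natural maps $\Gamma_{c}\to Gr^{p}_{F}H^{k+p-n}_{c}(X,DR(\mathcal{K}))\to\Gamma$ is the canonical injection, so vanishing of the middle term forces $\Gamma_{c}(X,\cH^{k}(\Omega^{p}(\mathcal{K})))=0$ and hence $\underline{\Gamma_{x}}(\cH^{k}(\Omega^{p}(\mathcal{K})))=0$ at every closed point; finally a general hyperplane-section induction on $\dim X$ (via \cite[Lemma 6.6]{ParkPopa}) plus Nakayama's lemma shows $\supp\cH^{k}(\Omega^{p}(\mathcal{K}))$ is zero-dimensional, whence $\cH^{k}(\Omega^{p}(\mathcal{K}))=0$. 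None of these three mechanisms appears in your proposal, and without them the asserted sheaf-level vanishing is unsupported.

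A secondary inaccuracy: $\depth(I\Omp)\ge k+2$ does not follow from purity of $IC^{H}_{X}$ alone. Purity gives the duality, but one also needs $I\underline{\Omega}^{q}_{X}\simeq\cH^{0}(I\underline{\Omega}^{q}_{X})$ for $q\le k$ (a consequence of pre-$k$-rationality) together with a separate computation (the paper's Lemma \ref{depthlemma}) handling the boundary case $n-p=i=k+1$, namely $\cH^{n-k-1}(I\underline{\Omega}^{k+1}_{X})=0$; this is where the hypothesis $k\le n-2$ enters. Your proposed $p\leftrightarrow n-p$ symmetry for extending to all $p$ is in the right spirit, but in the paper it is already subsumed by this duality for $I\Omp$ together with the fact that the $Gr^{i-p}_{F}$ family of isomorphisms in Theorem \ref{Thm1} yields the needed $E_1$-vanishing for every $p$ simultaneously.
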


 More generally, if $\Delta$ a closed subset of $X$, $$\depth_{\Delta}(\Omp) := \displaystyle \min_{q \in \Delta}\depth ((\Omp)_{q}),$$
 where $q \in X$ may not necessarily be closed. If $X$ has rational singularities, it was also shown in \cite{AH2},
$$\depth_{S}(\Omp) \geq 2 \quad \text{for $0 \leq p \leq \dim(X).$}$$
We prove the following generalization.

\begin{thm}
   Assume $X$ is an irreducible, normal, quasi-projective variety. If $X$ has pre-$k$-rational singularities for $k \leq \codim_{X}(S) -2$, then
    $$\depth_{S}(\underline{\Omega}^{p}_{X}) \geq k+2  \quad \text{for $0 \leq p \leq \dim(X)$}.$$
    
\end{thm}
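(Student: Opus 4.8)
The plan is to localize at an arbitrary point of $S$ and reduce to Theorem~\ref{Thm2} by passing to a general transversal slice. Since $\depth_S(\Omp)=\min_{q\in S}\depth\big((\Omp)_q\big)$, it suffices to fix $q\in S$ and prove $\depth\big((\Omp)_q\big)\ge k+2$. Put $Z:=\overline{\{q\}}$, $d:=\dim Z$ and $c:=\codim_X Z=\dim X-d$; since $S$ is closed we have $Z\subseteq S$, hence $c\ge\codim_X S\ge k+2$. Choose a general closed point $x\in Z$ and a general linear subvariety $Y\subseteq X$ of codimension $d$ through $x$, transversal to $Z$ at $x$ (if $d=0$, take $Y=X$ and $x=q$). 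Then $\dim Y=c\ge k+2$, and $Y$ is an irreducible, normal, quasi-projective variety by Bertini and Seidenberg's theorem, using $\dim Y\ge 2$.

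The key input is the local product structure of $X$ along $Z$: by generic equisingularity, for a general $x\in Z$ and general transversal slice $Y$ there is an isomorphism of complete local rings
\[
\hat{\mathcal O}_{X,x}\;\cong\;\hat{\mathcal O}_{Y,x}\,\hat{\otimes}_{\C}\,\C[[t_1,\dots,t_d]],
\]
compatible with $Y\hookrightarrow X$. Feeding this into the Künneth decomposition of the Du Bois complex of a product with a smooth factor, $\underline{\Omega}^{p}_{A\times B}\simeq\bigoplus_{a+b=p}\underline{\Omega}^{a}_{A}\boxtimes\Omega^{b}_{B}$, and using the compatibility of the duality functor $\D$ with external products, one checks that for $B$ smooth the variety $A\times B$ is pre-$k$-rational if and only if $A$ is; in particular $Y$ has pre-$k$-rational singularities. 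Since also $k\le\dim Y-2$, Theorem~\ref{Thm2} applies to $Y$ and gives $\depth\big((\underline{\Omega}^{a}_{Y})_{x}\big)\ge\depth(\underline{\Omega}^{a}_{Y})\ge k+2$ for all $0\le a\le\dim Y=c$.

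It remains to transport this bound back to $q$. Localizing the displayed decomposition at the prime of $Z$ exhibits $\hat{\mathcal O}_{X,q}$ as a faithfully flat local $\hat{\mathcal O}_{Y,x}$-algebra whose closed fibre, a localization of $\C[[t_1,\dots,t_d]]$, is a field; such an extension, like completion itself, preserves depth. Applying the Künneth formula once more, the completion of $(\Omp)_q$ is a finite direct sum of copies of the complexes $(\underline{\Omega}^{a}_{Y})_{x}\otimes_{\hat{\mathcal O}_{Y,x}}\hat{\mathcal O}_{X,q}$ for $\max(0,p-d)\le a\le\min(p,c)$. Since the depth of a finite direct sum is the minimum of the depths, we obtain
\[
\depth\big((\Omp)_q\big)=\min_{a}\depth\big((\underline{\Omega}^{a}_{Y})_{x}\big)\ge k+2,
\]
because every relevant $a$ lies in $[0,c]=[0,\dim Y]$. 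As $q\in S$ was arbitrary, $\depth_S(\Omp)\ge k+2$.

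The main obstacle is the second paragraph: verifying that a general transversal slice of $Z$ again has pre-$k$-rational singularities and that the passage from $X$ to $Y$ leaves the relevant depths untouched. This rests on the local product structure of $X$ at a general point of $Z$ (a form of generic equisingularity) together with the compatibility of the complexes $\underline{\Omega}^{\bullet}$ and of the duality functor $\D$ with products by a smooth factor; these are the facts one must pin down. Granting them, the depth bookkeeping of the third paragraph is routine, since completion and flat local extensions with zero-dimensional closed fibre preserve depth, and depth commutes with finite direct sums. Alternatively, one could mimic the proof of Theorem~\ref{Thm2} directly with $S$ playing the role of a point --- via the local cohomology sequence of the pair $(X,X\setminus S)$, the comparison of Theorem~\ref{Thm1} applied to both $X$ and $X\setminus S$, and the cosupport vanishing of $I\underline{\Omega}^{\bullet}_X$ along $S$ guaranteed by $\codim_X S\ge k+2$ --- but the reduction to the already established Theorem~\ref{Thm2} above is the more economical route.
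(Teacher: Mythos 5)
There is a genuine gap, and it sits exactly where you flag the ``main obstacle'': the local product structure you invoke is not a theorem. Generic equisingularity results along a stratum (Whitney conditions, Thom--Mather) give local \emph{topological} triviality over a dense open subset of $Z=\overline{\{q\}}$, not analytic or formal triviality; Whitney's classical examples of topologically trivial but analytically nontrivial families show that an isomorphism $\hat{\mathcal O}_{X,x}\cong\hat{\mathcal O}_{Y,x}\,\hat{\otimes}_{\C}\,\C[[t_1,\dots,t_d]]$ can fail at every point of $Z$. Without that splitting, the K\"unneth transfer of pre-$k$-rationality to the slice $Y$ and the depth bookkeeping back to $(\Omp)_q$ both collapse. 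Two further points would need justification even granting a product: the Du Bois complex and the duality functor $\D$ are not obviously computable from a formal-local product decomposition (K\"unneth for $\underline{\Omega}^\bullet$ is a statement about actual products of varieties), and the Bertini-type input you need --- that a general linear slice \emph{through a prescribed point} $x$ is normal and again pre-$k$-rational --- is not what the cited results provide. The hyperplane-section theorem of \cite{shenVenVo} used in the paper is for hyperplanes general in the full linear system; constraining them to pass through a fixed singular point is precisely where such statements break down.

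The paper's proof of Theorem \ref{depththm3} avoids all of this. Using the spectral sequence $\cH^{\alpha}(\bR\underline{\Gamma_{x}}(\cH^{\beta}(\bR\underline{\Gamma_{S}}(\Omp))))\Rightarrow\cH^{\alpha+\beta}(\bR\underline{\Gamma_{x}}(\Omp))$ together with the closed-point bound $\depth(\Omp)\ge k+2$ from Theorem \ref{Thm2}, it reduces the claim to showing that $\cH^{i}_{S}(\Omp)$ has zero-dimensional support for $i\le k+1$; this handles non-closed points of $S$ at once via the identity $\depth_S(\Omp)=\min\{i:\cH^i_S(\Omp)\neq 0\}$. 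The support statement is then proved by induction on $\dim X$, cutting with a hyperplane $H$ general in the whole linear system (so it meets any positive-dimensional support locus but is never required to contain a prescribed point), using the restriction triangle for $\underline{\Omega}^{\bullet}$ along $H$ and Nakayama's lemma, exactly as in the proof of Theorem \ref{depthThm}. Your closing ``alternative'' sketch is in the spirit of this argument, but as written it is only a sketch; the product-structure route you develop in detail does not go through.
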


With Theorem \ref{Thm2}, we can obtain an upper bound for the local cohomological dimension of $X$, which matches the upper bound for the local cohomological dimension given by Park and Popa \cite[Corollary 7.5]{ParkPopa}. Recall that for a smooth variety $Y$ such that $X \subseteq Y$ is a closed subvariety, we define the local cohomological dimension of $X$ in $Y$ to be
$$\lcd(Y,X) = \max\{q| \hspace{.05in}\cH^{q}_{X}(\cO_{Y}) \neq 0\}.$$
There is also the local cohomological defect $\lcdef(X)$ of $X$, which is defined as
$$\lcdef(X) := \lcd(Y,X) - \codim_{X}(Y).$$
By the results of Musta\c{t}\u{a} and Popa \cite{MuPo}, we can describe the local cohomological dimension of $X$ in $Y$, or the local cohomological defect, in terms of the depth of $\Omp.$ From Theorem \ref{Thm2} and the results of Musta\c{t}\u{a} and Popa, we provide a new proof of the statement below, which is a special case of \cite[Corollary 7.5]{ParkPopa}.
\begin{cor}\label{Thm3}\cite{ParkPopa}
     Assume $X$ is an irreducible, normal, quasi-projective variety. Consider a smooth variety $Y$ of dimension $m$ and assume $X \subseteq Y$ is a closed subvariety. If $X$ has pre-$k$-rational singularities, then $\lcd(Y,X) \leq \max \{\dim Y-2k-3, \codim_{Y}(X) \}.$ Equivalently, if $X$ has pre-$k$-rational singularities, then $\lcdef(X) \leq \max \{ \dim X-2k-3, 0 \}.$
\end{cor}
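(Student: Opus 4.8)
The plan is to derive Corollary \ref{Thm3} purely formally from Theorem \ref{Thm2} together with the dictionary of Musta\c{t}\u{a}--Popa \cite{MuPo} relating local cohomological dimension (and the local cohomological defect) to the depths of the graded Du Bois complexes $\Omk$. First I would recall the precise statement from \cite{MuPo}: for $X \subseteq Y$ with $Y$ smooth of dimension $m = \dim Y$ and $c = \codim_Y(X)$, one has
$$\lcd(Y,X) = m - \min_{k \geq 0}\bigl\{ \depth(\Omk) + k \bigr\} \quad\text{(equivalently } \lcdef(X) = \max_{k\ge 0}\bigl\{\dim X - k - \depth(\Omk)\bigr\}\text{)},$$
with the convention that terms with $\Omk$ acyclic (which happens for $k$ large, in fact $k > \dim X$) are dropped, and noting $\lcdef(X) \geq 0$ always. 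So the entire content of the corollary is an inequality on $\depth(\Omk)$ for every $k$, and I would treat $p$ and the "level" $k$ of the hypothesis separately to avoid clashing notation.

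The next step is the case split. Write $n = \dim X$ and suppose $X$ has pre-$k$-rational singularities. By Theorem \ref{Thm2}, provided $k \leq n-2$ we get $\depth(\Omp) \geq k+2$ for all $0 \leq p \leq n$; hence for every such $p$,
$$n - p - \depth(\Omp) \leq n - p - (k+2) \leq n - k - 2,$$
since $p \geq 0$. Taking the maximum over $p$ (and recalling $\lcdef(X)\ge 0$) gives $\lcdef(X) \leq \max\{n - k - 2, 0\}$. However the target bound is the sharper $\lcdef(X) \leq \max\{n - 2k - 3, 0\}$, so the naive estimate is not enough: I must gain an extra $k+1$ in the range $0 \leq p \leq k$. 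This is exactly where the second half of Theorem \ref{Thm2} enters: for $0 \leq p \leq k$ the map $\cH^i(\Omp) \to \cH^i(I\Omp)$ is an isomorphism for $i \leq k$ and injective for $i = k+1$. I would combine this with the known depth bound for the intersection-cohomology complexes $\depth(I\Omp)$ — which, for $X$ normal, is governed by the codimension of the singular locus and the general structure of the pure Hodge module $IC_X^H$ — to conclude that $\depth(\Omp) \geq 2k+3$ for $0 \leq p \leq k$. (Concretely: the isomorphism/injectivity statement forces $\cH^i(\Omp)=0$ for $i \leq k$ beyond what degree bookkeeping alone gives, and then a duality/local-cohomology spectral sequence argument, of the same flavor as the proof of Theorem \ref{Thm2}, upgrades vanishing of cohomology sheaves in low degree to a lower bound on depth.) Granting this, for $0\le p\le k$ we get $n - p - \depth(\Omp) \leq n - 0 - (2k+3) = n - 2k - 3$, and for $k < p \leq n$ we still only have $\depth(\Omp)\ge k+2$, giving $n - p - \depth(\Omp) \leq n - (k+1) - (k+2) = n - 2k - 3$ as well, using $p \geq k+1$. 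In all cases $n - p - \depth(\Omp) \leq n - 2k-3$, so $\lcdef(X) \leq \max\{n-2k-3,0\}$, and translating back via $\lcd(Y,X) = \lcdef(X) + c$ with $c = \codim_Y(X)$ and $\dim X = m - c$ yields $\lcd(Y,X) \leq \max\{\dim Y - 2k - 3,\ \codim_Y(X)\}$ exactly.

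The main obstacle I anticipate is the improved bound $\depth(\Omp) \geq 2k+3$ for $0 \leq p \leq k$: Theorem \ref{Thm2} as stated only gives $k+2$, so the sharper $2k+3$ must be extracted from the comparison with $I\Omp$ rather than quoted directly. I would handle this by arguing, for a closed point $x \in X$, that the stalk $(\Omp)_x$ has no cohomology of the derived $\mathcal{R}\Gamma_{m}$ (equivalently, by local duality, that $\bR\mathcal{H}om((\Omp)_x,\omega^\bullet)$ is concentrated in degrees $\le -(2k+3)$) — splitting off the contribution of the codimension hypothesis $\codim_X(S) \geq 2k+3$ implicit in the pre-$k$-rational setting from the contribution of the low-degree comparison with $I\Omp$. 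One subtlety to check carefully is the behavior at non-closed points and the reduction of $\depth$ to closed points, but the estimates are local on $X$ and the argument is uniform, so this is bookkeeping rather than a genuine difficulty; likewise the degenerate cases $k > n-2$ (where Theorem \ref{Thm2} does not apply but then $X$ has few singularities and $\lcdef(X) = 0$) are handled separately and trivially. With that lemma in hand the corollary follows by the arithmetic above, and I would remark that the resulting bound agrees term-by-term with \cite[Corollary 7.5]{ParkPopa}, providing the promised alternative proof.
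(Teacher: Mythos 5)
Your overall skeleton agrees with the paper's: reduce via the Musta\c{t}\u{a}--Popa dictionary (Theorem \ref{MPthm}) to the inequality $\depth(\underline{\Omega}^{q}_{X})+q\geq 2k+3$ for all $q$, handle $q\geq k+1$ by Theorem \ref{Thm2} ($\depth\geq k+2$ plus $q\geq k+1$), and reduce to $n\geq 2k+3$ in the degenerate range. The range $0\leq p\leq k$ is where your argument has a genuine gap. You claim $\depth(\Omp)\geq 2k+3$ there, and you propose to extract it from the comparison with $I\Omp$ together with ``the codimension hypothesis $\codim_X(S)\geq 2k+3$ implicit in the pre-$k$-rational setting.'' But pre-$k$-rational singularities impose \emph{no} condition on $\codim_X(S)$ --- the codimension bound is precisely the extra condition that upgrades pre-$k$-rational to $k$-rational, and even there it is $2k+2$, not $2k+3$. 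Without that input, the $I\Omp$-comparison route (Proposition \ref{depthProp}, Theorem \ref{depthThm}) only ever yields $\depth\geq k+2$. Worse, the uniform bound $\depth(\Omp)\geq 2k+3$ for $p\leq k$ is stronger than what is true or needed: when $n=2k+3$ it would force $\depth(\underline{\Omega}^{k}_{X})=n$, i.e.\ that $\D(\underline{\Omega}^{k}_{X})[-n]\simeq\underline{\Omega}^{n-k}_{X}$ is a sheaf, which does not follow from pre-$k$-rationality.

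The missing idea is the self-duality that the paper uses instead. For $0\leq p\leq k$, pre-$k$-rationality gives quasi-isomorphisms $\cH^{0}(\Omp)\simeq\Omp\simeq\D(\underline{\Omega}^{n-p}_{X})[-n]$ (by \cite{PopaShenVo}/\cite{shenVenVo}), so $\D(\Omp)[-n]\simeq\underline{\Omega}^{n-p}_{X}$, whose cohomology sheaves vanish in degrees $>p$; hence $\depth(\Omp)\geq n-p$ with no codimension hypothesis. Note that the inequality actually required by the MP criterion is $\depth(\Omp)\geq 2k+3-p$, not $\geq 2k+3$: the bound $n-p$ delivers exactly this once $n\geq 2k+3$, since $\depth(\Omp)+p\geq n$. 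Your own arithmetic for $p\leq k$ used only $p\geq 0$ and therefore demanded the full $2k+3$, which is why you were forced into the unprovable intermediate claim. Two smaller points: the reduction to $n\geq 2k+3$ is justified by Remark \ref{RHMrmk} ($\underline{\Omega}^{p}_{X}\simeq I\underline{\Omega}^{p}_{X}$ for all $p$ when $n\leq 2k+2$, whence $\depth(\Omp)\geq n-p$ for all $p$ by the self-duality of $IC^{H}_{X}$), not because ``$X$ has few singularities''; and the paper only needs the inequality direction of the MP criterion, so stating it as an equality, while harmless, is more than is used.
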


\subsection*{Acknowledgments} The author would like to thank Sung Gi Park for helpful discussions.


\section{pre-$k$-rational singularities and mixed Hodge structures}

A variety always means a reduced, separated scheme over $\C$. However, we will primarily be working in the analytics category. So, any variety also has the associated analytic structure.  Recall, that for any complex algebraic variety $X$, we have an object $(\underline{\Omega}^{\bullet}_{X}, F)$ in the filtered derived category \cite{dubois} with the following properties:
\begin{enumerate}
\item If we forget the filtration, then there is an isomorphism $\underline{\Omega}^{\bullet}_{X} \simeq \C_{X}$ in the usual derived category.
\item If $\Omega^{\bullet}_{X}$ is the usual de Rham complex with the ``stupid" filtration, there exists a natural map of filtered complexes
$$(\Omega^{\bullet}_{X},F) \rightarrow (\underline{\Omega}^{\bullet}_{X},F).$$
If $X$ is smooth, this map is an isomorphism.
\item If $\pi_\bullet: X_\bullet \to X$ is a simplicial resolution, then 
$$Gr^{p}_{F}\underline{\Omega}^{\bullet}_{X}[p]:=\underline{\Omega}_X^p \simeq \R \pi_{\bullet} \Omega_{X_\bullet}^p.$$
\end{enumerate}
For the rest of this section, we have $X$ to be an irreducible, normal, quasi-projective variety of dimension $n$ with singular locus $S$.

From the natural map $a_{X}:X \rightarrow \{pt\}$, we have the object $\Q^{H}_{X}[n]:= a_{X}^{*}\Q^{H}_{pt}[n] \in D^{b}MHM(X)$ in the derived category of mixed Hodge modules. We also have the natural quotient map $\Q^{H}_{X}[n] \rightarrow Gr^{W}_{n}\cH^{0}(\Q^{H}_{X}[n])=IC^{H}_{X}$ in the derived category of mixed Hodge modules, where $IC^{H}_{X}$ is the pure Hodge module on $X$ whose de Rham complex is given by the intersection complex \cite[\S 4]{saito2}. The map $\Q^{H}_{X}[n] \rightarrow IC^{H}_{X}$ is an isomorphism away from the singular set $S.$ Let $\mathcal{K}= Cone(\Q^{H}_{X}[n] \rightarrow IC^{H}_{X})$ with the exact triangle
        \begin{equation}\label{eq1}
            \bT \Q^{H}_{X}[n] \ar[r] & IC^{H}_{X} \ar[r] & \mathcal{K} \ar[r, "+1"] & \hfill. \eT
        \end{equation}
    Apply the functor $Gr^{F}_{-p}DR(\bullet)$ to the previous  exact triangle, and we obtain the exact triangle
    \begin{equation}\label{eq2}
        \bT \underline{\Omega}^{p}_{X}[n-p] \ar[r] &  Gr^{F}_{-p}DR(IC^{H}_{X}) \ar[r] & Gr^{F}_{-p}DR(\mathcal{K}) \ar[r, "+1"] & \hfill. \eT
    \end{equation}
    The quasi-isomorphism $Gr^{F}_{-p}DR(\Q^{H}_X[n])\simeq \underline{\Omega}^{p}_{X}[n-p]$ is due to Saito \cite{saito5}. To simplify notation, we define
    $$I\Omp := Gr^{F}_{-p}DR(IC^{H}_{X})[p-n]$$
    $$\Omega^{p}(\mathcal{K}):= Gr^{F}_{-p}DR(\mathcal{K})[p-n].$$
    So, the exact triangle \ref{eq2} is changed to
    \begin{equation}\label{phimap}
        \bT \underline{\Omega}^{p}_{X} \ar[r, "\phi_{p}"] &  I\Omp \ar[r] & \Omega^{p}(\mathcal{K}) \ar[r, "+1"] & \hfill. \eT
    \end{equation}
    Note that for any resolution of singularities $f: \tX \rightarrow X$, $\cH^{0}(I\Omp) \cong f_{*}\Omega^{p}_{\tX}$  (see \cite{ks} for details).

In \cite{AH2}, for $\cM \in D^{b}MHM(X)$ in the derived category of mixed Hodge modules, the de Rham filtration for $H^{i}(X, DR(\cM))$ and $H^{i}_{c}(X, DR(\cM))$ was constructed, which was constructed by embedding $X$ in a smooth projective variety $\cX$. For the convenience of the reader, we will review this construction. Choose an open embedding $\iota: X \hookrightarrow X'$ with $X'$ projective, and let $\rho: X' \hookrightarrow \cX$ be any closed embedding, where $\cX$ is a smooth projective variety. The composition of these maps will be denoted as $i_{X}: X \hookrightarrow \cX.$  There exists an open subset $\cY \subset \cX$ such that $i:X \hookrightarrow \cY$ is a closed embedding, and there is a commutative diagram
   $$\adjustbox{scale=1.25}{\bT X \arrow[r, "\iota"] \arrow[d, "i"] \arrow[dr, "i_{X}"] & X' \arrow[d, "\rho"] \\
         \cY \arrow[r, "j"] & \cX. \eT}$$ 
     The Hodge filtration is as follows,
     $$F^{p}H^{i}(X, DR(\cM)) := \im\bigg[H^{i}(\cX,  F_{-p}(DR(i_{X+}\cM))) \rightarrow H^{i}(X, DR(\cM)) \bigg]$$

    $$F^{p}H^{i}_{c}(X, DR(\cM)) := \im\bigg[H^{i}(\cX,  F_{-p}DR(i_{X!}\cM)) \rightarrow H^{i}_{c}(X, DR(\cM)) \bigg].$$
    We define the de Rham filtration in the following way,

    $$^{d}F^{p}H^{i}(X, DR(\cM)) := \im\bigg[H^{i}(\cY, F_{-p}DR(i_{+}\cM)) \rightarrow H^{i}(X, DR(\cM)) \bigg]$$

     $$^{d}F^{p}H^{i}_{c}(X, DR(\cM)) := \im\bigg[H^{i}_{c}(\cY, F_{-p}DR(i_{+}\cM)) \rightarrow H^{i}_{c}(X, DR(\cM)) \bigg].$$

     \begin{rmk}
        As stated in \cite{AH2}, we expect the de Rham filtration to be independent of the embedding and well-defined. However, the results of this paper will be independent of the choice of the embedding.
     \end{rmk}

     \begin{prop}\label{CompProp}\cite[Prop. 2.4]{AH2}
    For every $p \in \Z$, there exists  natural injective maps $$F^{p}H^{i}(X, DR(\cM)) \hookrightarrow \hspace{.01in} ^{d}F^{p}H^{i}(X, DR(\cM))$$
    $$^{d}F^{p}H^{i}_{c}(X, DR(\cM)) \hookrightarrow F^{p}H^{i}_{c}(X, DR(\cM)).$$
\end{prop}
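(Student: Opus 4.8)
The plan is, for each of the two filtrations, to compare the two ``ambient'' hypercohomology groups that enter its definition, using the factorization $i_{X}=j\circ i$ with $j\colon\cY\hookrightarrow\cX$ an open embedding and $i\colon X\hookrightarrow\cY$ a closed embedding. The elementary fact driving everything is that restriction to the open subset $\cY$ is exact and commutes both with $DR$ and with its Hodge subcomplex $F_{-p}DR$, whereas the open direct image $\bR j_{*}$ and its dual $j_{!}$ commute with $DR$ but \emph{not} with $F_{-p}DR$; this last failure is exactly what degrades the expected equalities into the asserted inclusions. Note at the outset that both $F^{p}$ and ${}^{d}F^{p}$ are by definition subspaces (images of maps) of the \emph{same} space --- $H^{i}(X,DR(\cM))$ in the first assertion, $H^{i}_{c}(X,DR(\cM))$ in the second --- so it suffices to show one is contained in the other; the ``natural injective map'' is then the inclusion, and injectivity is automatic.

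For $F^{p}H^{i}(X,DR(\cM))\hookrightarrow {}^{d}F^{p}H^{i}(X,DR(\cM))$ I would argue as follows. Since $j$ is open, $j^{*}i_{X+}\cM\simeq i_{+}\cM$, hence $F_{-p}DR(i_{X+}\cM)|_{\cY}\simeq F_{-p}DR(i_{+}\cM)$, and the unit $\mathrm{id}\to\bR j_{*}j^{*}$ gives a natural morphism $F_{-p}DR(i_{X+}\cM)\to\bR j_{*}F_{-p}DR(i_{+}\cM)$, compatible with the analogous map after forgetting the filtration. Applying $H^{i}(\cX,-)$ and using $H^{i}(\cX,\bR j_{*}(-))=H^{i}(\cY,-)$ yields the commutative square
$$\bT H^{i}(\cX,F_{-p}DR(i_{X+}\cM)) \ar[r]\ar[d] & H^{i}(\cY,F_{-p}DR(i_{+}\cM)) \ar[d] \\ H^{i}(\cX,DR(i_{X+}\cM)) \ar[r, "\sim"] & H^{i}(\cY,DR(i_{+}\cM)) \eT$$
whose bottom arrow is an isomorphism because $i_{X+}\cM\simeq\bR j_{*}(i_{+}\cM)$ and $DR$ commutes with $\bR j_{*}$; writing $i_{X+}\cM=\rho_{*}\bR\iota_{*}\cM$ and using that $X'$ is proper, both objects in the bottom row are canonically identified with $H^{i}(X,DR(\cM))$, compatibly with that arrow. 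By definition the map cutting out $F^{p}$ is the left vertical followed by this identification; a chase of the square rewrites it as the top horizontal followed by the map $H^{i}(\cY,F_{-p}DR(i_{+}\cM))\to H^{i}(X,DR(\cM))$ cutting out ${}^{d}F^{p}$. Taking images gives $F^{p}\subseteq{}^{d}F^{p}$.

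The compact-support assertion is the Verdier-dual statement, and I would prove it by the same diagram with $\bR j_{*}$ replaced by $j_{!}$, the unit replaced by the counit $j_{!}j^{*}\to\mathrm{id}$, and all the horizontal arrows reversed. Here one uses that $\cX$ is proper, so that $H^{i}_{c}(\cY,-)=H^{i}(\cX,j_{!}(-))$; that $DR$ commutes with $j_{!}$, so the unfiltered row is again an isomorphism, now identifying everything with $H^{i}_{c}(X,DR(\cM))$ via $i_{X!}\cM=\rho_{*}\iota_{!}\cM$ and properness of $X'$; and that the counit supplies a natural morphism $j_{!}F_{-p}DR(i_{+}\cM)\to F_{-p}DR(i_{X!}\cM)$. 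The same chase then yields ${}^{d}F^{p}H^{i}_{c}\subseteq F^{p}H^{i}_{c}$.

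I expect the only real point of friction to be bookkeeping rather than mathematics: one must apply the adjunction unit/counit to the \emph{filtered} complexes $F_{-p}DR(i_{X+}\cM)$ and $F_{-p}DR(i_{X!}\cM)$, check that the resulting squares are compatible with the forgetful maps to the unfiltered de Rham complexes, and verify that the unfiltered rows are genuine isomorphisms --- this last being where Saito's compatibilities ($DR$ commuting with $\bR j_{*}$, with $j_{!}$, and with closed pushforward) and the properness of $\cX$ and $X'$ are used. No step of this argument requires a computation.
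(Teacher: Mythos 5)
This proposition is imported from \cite[Prop.~2.4]{AH2} and the present paper gives no proof of its own, so there is nothing in-text to compare against; your argument is correct and is the natural one that the setup forces. Observing that both filtrations are images inside the same group $H^{i}(X,DR(\cM))$ (resp.\ $H^{i}_{c}(X,DR(\cM))$), and then using the unit $\mathrm{id}\to \bR j_{*}j^{*}$ (resp.\ the counit $j_{!}j^{*}\to\mathrm{id}$) on the filtered complexes together with the unfiltered identifications coming from Saito's compatibility of $DR$ with $j_{+}$, $j_{!}$ and closed pushforward, is exactly the right mechanism, and your bookkeeping of which direction each adjunction pushes the image is what produces the two opposite inclusions.
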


The Hodge filtration is strict, so the corresponding spectral sequence degenerates at $E_{1}$. However, the de Rham filtration may not be strict, and the associated  sequences,
\begin{equation}\label{eq:deRham}
\begin{split}
   E^{p, i -p}_{1} = H^{i}(\cY, Gr^{F}_{-p}DR(i_{+}\cM)) =H^{i}(X, Gr^{F}_{-p}DR(\cM))& \Rightarrow \hspace{.01in}^{d}Gr_{F}^{p}H^{i}(X, DR(\cM))\\
   E^{p, i -p}_{1} = H^{i}_{c}(\cY, Gr^{F}_{-p}DR(i_{+}\cM)) =H^{i}_{c}(X, Gr^{F}_{-p}DR(\cM)) &\Rightarrow \hspace{.01in}^{d}Gr_{F}^{p}H^{i}_{c}(X, DR(\cM))
\end{split}
\end{equation}
need not degenerate. Considering $\mathcal{K} \in D^{b}MHM(X)$ in the exact sequence \ref{eq1}, we have the following natural maps that correspond to the Hodge and de Rham filtration.
$$\underbrace{H^{i}_{c}(X, \Omega^{p}(\mathcal{K}))}_{ E^{p,i-n}_{1} \text{de Rham filtration}} \rightarrow \underbrace{Gr^{p}_{F}H^{i+p-n}_{c}(X, DR(\mathcal{K}))}_{E^{p,i-n}_{1} \text{Hodge filtration}} \rightarrow \underbrace{H^{i}(X, \Omega^{p}(\mathcal{K}))}_{ E^{p,i-n}_{1} \text{de Rham filtration}}$$

\begin{lemma}\label{Surj. Lemma}
    The map $^{d}Gr^{0}_{F}H^{i}_{c}(X, DR(\mathcal{K})) \rightarrow Gr^{0}_{F}H^{i}_{c}(X,DR(\mathcal{K}))$ is surjective for all $i \in \Z.$
\end{lemma}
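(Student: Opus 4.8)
The plan is to reduce the lemma to the single assertion that, for the index $p=0$, the de~Rham step ${}^{d}F^{0}H^{i}_{c}(X, DR(\mathcal{K}))$ already equals all of $H^{i}_{c}(X, DR(\mathcal{K}))$. Granting this, Proposition~\ref{CompProp} with $p=0$ (which gives ${}^{d}F^{0}H^{i}_{c}(X, DR(\mathcal{K})) \subseteq F^{0}H^{i}_{c}(X, DR(\mathcal{K})) \subseteq H^{i}_{c}(X, DR(\mathcal{K}))$) forces $F^{0}H^{i}_{c}(X, DR(\mathcal{K})) = H^{i}_{c}(X, DR(\mathcal{K}))$ as well, while Proposition~\ref{CompProp} with $p=1$ gives ${}^{d}F^{1}H^{i}_{c}(X, DR(\mathcal{K})) \subseteq F^{1}H^{i}_{c}(X, DR(\mathcal{K}))$ inside $H^{i}_{c}(X, DR(\mathcal{K}))$. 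The map in the lemma is then the map induced on quotients,
$$ {}^{d}Gr^{0}_{F}H^{i}_{c}(X, DR(\mathcal{K})) = \frac{H^{i}_{c}(X, DR(\mathcal{K}))}{{}^{d}F^{1}H^{i}_{c}(X, DR(\mathcal{K}))} \longrightarrow \frac{H^{i}_{c}(X, DR(\mathcal{K}))}{F^{1}H^{i}_{c}(X, DR(\mathcal{K}))} = Gr^{0}_{F}H^{i}_{c}(X, DR(\mathcal{K})), $$
which is a surjection.

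To prove ${}^{d}F^{0}H^{i}_{c}(X, DR(\mathcal{K})) = H^{i}_{c}(X, DR(\mathcal{K}))$, I would first observe that $\Omega^{p}(\mathcal{K}) = 0$ for all $p<0$. Indeed, applying $Gr^{F}_{-p}DR(\bullet)$ to the exact triangle \eqref{eq1} produces the triangle \eqref{phimap}: here $\underline{\Omega}^{p}_{X} = 0$ for $p<0$ by construction of the Du~Bois complex, and $I\Omp = Gr^{F}_{-p}DR(IC^{H}_{X})[p-n] = 0$ for $p<0$ as well --- for example because $IC^{H}_{X}$ is a direct summand of $f_{*}\Q^{H}_{\tX}[n]$ for a resolution $f:\tX\to X$ (decomposition theorem) and $Gr^{F}_{-p}DR$ commutes with proper pushforward, so that $Gr^{F}_{-p}DR(IC^{H}_{X})$ is a direct summand of $Gr^{F}_{-p}DR(f_{*}\Q^{H}_{\tX}[n]) \cong \bR f_{*}(\Omega^{p}_{\tX}[n-p])$, which vanishes for $p<0$ since $\Omega^{p}_{\tX} = 0$. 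Hence $\Omega^{p}(\mathcal{K}) = 0$ for $p<0$, equivalently $Gr^{F}_{q}DR(\mathcal{K}) = 0$ for every $q\geq 1$. Since $i:X\hookrightarrow\cY$ is a closed embedding, $Gr^{F}_{q}DR(i_{+}\mathcal{K}) \cong i_{*}Gr^{F}_{q}DR(\mathcal{K})$, so $Gr^{F}_{q}DR(i_{+}\mathcal{K}) = 0$ for $q\geq 1$; therefore the inclusion $F_{0}DR(i_{+}\mathcal{K}) \hookrightarrow DR(i_{+}\mathcal{K})$ is a quasi-isomorphism (its cokernel is a bounded complex all of whose graded pieces, hence cohomology sheaves, vanish). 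Consequently the natural map
$$ H^{i}_{c}(\cY, F_{0}DR(i_{+}\mathcal{K})) \longrightarrow H^{i}_{c}(\cY, DR(i_{+}\mathcal{K})) = H^{i}_{c}(X, DR(\mathcal{K})) $$
is an isomorphism, and its image --- which is ${}^{d}F^{0}H^{i}_{c}(X, DR(\mathcal{K}))$ by definition --- is all of $H^{i}_{c}(X, DR(\mathcal{K}))$. (Alternatively, for the Hodge side one can argue directly that $Gr^{p}_{F}H^{i}_{c}(X, DR(\mathcal{K})) = 0$ for $p<0$ via the long exact sequence attached to \eqref{eq1} in mixed Hodge structures, using that $H^{\bullet}_{c}(X,\Q)$ and $IH^{\bullet}_{c}(X,\Q)$ have no Hodge pieces in negative index.)

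I expect the only delicate aspect to be bookkeeping rather than a real obstruction: one must fix the normalizations of $DR$ and of the Hodge filtration so that $Gr^{F}_{-p}DR(\bullet)$ vanishes precisely for $p\notin[0,\dim X]$ (equivalently, $\Omega^{p}(\cM)$ is concentrated in $0\le p\le \dim X$); one must use that $Gr^{F}_{q}DR$ commutes with the closed-immersion pushforward $i_{+}$ in the form stated; and one must check that the natural injective maps of Proposition~\ref{CompProp} are compatible with the tautological inclusions of ${}^{d}F^{p}H^{i}_{c}$ and $F^{p}H^{i}_{c}$ into $H^{i}_{c}(X, DR(\mathcal{K}))$, so that the comparison map of the lemma is indeed the induced map on the quotients displayed above. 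With those points settled, the argument is immediate.
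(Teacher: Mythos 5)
Your proposal is correct and follows essentially the same route as the paper: show $Gr^{F}_{q}DR(\mathcal{K})=0$ for $q>0$ (from the corresponding vanishing for $\Q^{H}_{X}[n]$ and $IC^{H}_{X}$), deduce ${}^{d}F^{0}H^{i}_{c}=H^{i}_{c}$, use Proposition~\ref{CompProp} to get $F^{0}H^{i}_{c}=H^{i}_{c}$, and conclude surjectivity on the $Gr^{0}_{F}$ quotients. Your write-up merely supplies more detail (e.g.\ the decomposition-theorem justification for $I\Omp=0$ when $p<0$) than the paper's terser version.
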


\begin{proof}
    Consider the exact triangles \ref{eq1} and \ref{eq2}. We have
    $$Gr^{F}_{p}DR(\Q^{H}_{X}[n]) = 0 \quad \text{for $p>0$.}$$
    $$Gr^{F}_{p}DR(IC^{H}_{X}) = 0 \quad \text{for $p>0$.}$$
    Therefore, $Gr^{F}_{p}DR(\mathcal{K}) = 0$ for $p >0$ and we must have
    $$^{d}F^{0}H^{i}_{c}(X, DR(\mathcal{K})) = H^{i}_{c}(X, DR(\mathcal{K})) \quad \text{for $i \in \Z$.}$$ By Proposition \ref{CompProp}, we must also have $F^{0}H^{i}_{c}(X,DR(\mathcal{K})) = H^{i}_{c}(X,DR(\mathcal{K})).$ Considering the commutative diagram below,
    $$\bT ^{d}F^{0}H^{i}_{c}(X, DR(\mathcal{K})) = H^{i}_{c}(X, DR(\mathcal{K})) \ar[d] \ar[r] & ^{d}Gr^{0}_{F}H_{c}^{i}(X, DR(\mathcal{K})) \ar[d] \ar[r] & 0 \\
    F^{0}H^{i}_{c}(X, DR(\mathcal{K})) = H^{i}_{c}(X, DR(\mathcal{K})) \ar[r] & Gr^{0}_{F}H^{i}_{c}(X, DR(\mathcal{K}))  \ar[r] & 0 \eT$$
    we must have the map $^{d}Gr^{0}_{F}H^{i}_{c}(X, DR(\mathcal{K})) \rightarrow Gr^{0}_{F}H^{i}_{c}(X,DR(\mathcal{K}))$ to be surjective. 
\end{proof}

\begin{thm}\label{HodgeThm}
     If $X$ has pre-$k$-rational singularities, then the natural maps
    $$Gr^{p}_{F}H^{i}_{c}(X, \C) \rightarrow Gr^{p}_{F}IH^{i}_{c}(X, \C)$$
    $$Gr^{i-p}_{F}H^{i}_{c}(X, \C) \rightarrow Gr^{i-p}_{F}IH^{i}_{c}(X, \C)$$
   are isomorphisms for $0 \leq p \leq k.$ In particular, the natural map of mixed Hodge structures $H^{i}_{c}(X, \Q) \rightarrow IH^{i}_{c}(X,\Q)$ is an isomorphism for $i \leq 2k+1$ and injective for $i = 2k+2.$
\end{thm}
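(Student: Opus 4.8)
The plan is to identify the comparison morphism with the one induced on (compactly supported) cohomology by the canonical map $\mathbb{Q}^H_X[n]\to IC^H_X$ of \ref{eq1}, and then to reduce the statement to the vanishing of graded pieces of $\mathbb{H}^\bullet_c(X,DR(\mathcal{K}))$. Applying $(a_X)_!$ to \ref{eq1} and taking cohomology gives a long exact sequence of mixed Hodge structures
\[
\cdots\longrightarrow H^i_c(X,\mathbb{Q})\longrightarrow IH^i_c(X,\mathbb{Q})\longrightarrow \mathbb{H}^{i-n}_c(X,DR(\mathcal{K}))\longrightarrow H^{i+1}_c(X,\mathbb{Q})\longrightarrow\cdots.
\]
Since $Gr^\bullet_F$ is exact on mixed Hodge structures and the Hodge filtration on the cohomology of a complex of mixed Hodge modules is strict (so the Hodge--de Rham spectral sequence degenerates at $E_1$), one gets $Gr^p_F H^i_c(X,\mathbb{C})\cong\mathbb{H}^{i-p}_c(X,\Omp)$, $Gr^p_F IH^i_c(X,\mathbb{C})\cong\mathbb{H}^{i-p}_c(X,I\Omp)$ and $Gr^p_F\mathbb{H}^{i-n}_c(X,DR(\mathcal{K}))\cong\mathbb{H}^{i-p}_c(X,\Omega^p(\mathcal{K}))$, the first two compatibly with $\phi_p$ from \ref{phimap}. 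Thus the first family of maps is an isomorphism for all $i$ once $\mathbb{H}^\bullet_c(X,\Omega^p(\mathcal{K}))=0$, and (writing $q=i-p$) the second family is an isomorphism for all $i$ once $\mathbb{H}^{p'}_c(X,\Omega^q(\mathcal{K}))=0$ for $0\le p'\le k$ and all $q$; in either case the task is to show that $Gr^p_F\mathbb{H}^\bullet_c(X,DR(\mathcal{K}))$ vanishes for $p$ in the appropriate range.

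For the first (``bottom'') family I would establish that $\Omega^p(\mathcal{K})=0$, i.e. that $\phi_p$ is a quasi-isomorphism, for $0\le p\le k$. Since pre-$k$-rational implies pre-$k$-Du Bois, $\Omp$ is a sheaf in degree $0$ there, while $\cH^{<0}(I\Omp)=0$ and $\cH^0(I\Omp)=f_*\Omega^p_{\tX}$ is reflexive; the composite $\Omp\xrightarrow{\phi_p}I\Omp\to \mathbb{D}(\underline{\Omega}^{n-p}_X)[-n]$ is an isomorphism over $X\setminus S$, so $\Omega^p(\mathcal{K})$ is supported on $S$, and running Grothendieck duality through this factorization — identifying the second arrow, up to Tate twist, with $\mathbb{D}$ of $\phi_{n-p}$ via the self-duality of $IC^H_X$, and invoking the hypothesis $\cH^{>0}(\mathbb{D}(\underline{\Omega}^{n-p}_X)[-n])=0$ — forces $\cH^\bullet(\phi_p)$ to be an isomorphism; this is the equivalence, for the notion of pre-$k$-rational singularities used here, between the definition via $\mathbb{D}(\underline{\Omega}^{n-p}_X)[-n]$ and the vanishing of the cones $\Omega^p(\mathcal{K})$ for $p\le k$. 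Together with $\Omega^p(\mathcal{K})=0$ for $p<0$, this gives $Gr^p_F\mathbb{H}^\bullet_c(X,DR(\mathcal{K}))=0$ for all $p\le k$.

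For the second (``top'') family I would argue dually. By Poincar\'e--Verdier duality $\mathbb{H}^m_c(X,DR(\mathcal{N}))^\vee\cong\mathbb{H}^{-m}(X,DR(\mathbb{D}\mathcal{N}))$, together with the self-duality of $IC^H_X$ and the identification of $\mathbb{D}\mathcal{K}$, up to twist, with the cone of $IC^H_X(n)\to\omega^H_X[-n]$, the required vanishing of the low-degree groups $\mathbb{H}^{p'}_c(X,\Omega^q(\mathcal{K}))$ for $0\le p'\le k$ translates into a vanishing of top-degree hypercohomology of the complexes obtained from $Gr^F DR(\mathbb{D}\mathcal{K})$, which upon unwinding the dual triangle is again governed by the condition $\cH^{>0}(\mathbb{D}(\underline{\Omega}^{n-p}_X)[-n])=0$. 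It is here that Proposition~\ref{CompProp} and Lemma~\ref{Surj. Lemma} are used: the de Rham filtration on $\mathbb{H}^\bullet_c(X,DR(\mathcal{K}))$, whose $E_1$-terms (as in \ref{eq:deRham}) are the honest groups $\mathbb{H}^\bullet_c(X,\Omega^\bullet(\mathcal{K}))$, need not be strict, so one compares it to the Hodge filtration via the natural maps of Proposition~\ref{CompProp}, using Lemma~\ref{Surj. Lemma} to pass from the de Rham-graded vanishings one can see directly to the Hodge-graded vanishings the theorem asserts. Granting all this, the ``in particular'' is formal: the kernel and cokernel of $H^i_c(X,\mathbb{Q})\to IH^i_c(X,\mathbb{Q})$ are subquotients of $\mathbb{H}^{i-n}_c(X,DR(\mathcal{K}))$ and $\mathbb{H}^{i-n-1}_c(X,DR(\mathcal{K}))$, whose Hodge filtrations are now concentrated in graded degrees strictly between $k$ and $i-k$; this range is empty once $i\le 2k+1$, so these groups vanish for $i\le 2k+1$, giving the isomorphism for $i\le 2k+1$ and injectivity for $i=2k+2$.

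The main obstacle is the step extracting concrete vanishings from pre-$k$-rationality: the hypothesis is phrased on the ``dual side'', in terms of $\mathbb{D}(\underline{\Omega}^{n-p}_X)[-n]$, and one must transport it through coherent duality — over the singular locus, with no codimension bound on $S$ available — to the vanishing of $\Omega^p(\mathcal{K})$ for $0\le p\le k$ and to the complementary low-degree hypercohomology vanishings for the top Hodge pieces. The absence of a codimension hypothesis is precisely what makes the bottom-family statement require $\Omega^p(\mathcal{K})=0$ exactly, rather than a dimension-count estimate involving $\codim_X(S)$.
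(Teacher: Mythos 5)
Your overall architecture for the first family of maps is the paper's: start from the triangle $\Q^{H}_{X}[n]\to IC^{H}_{X}\to\mathcal{K}$ of \ref{eq1}, reduce everything to the vanishing of $Gr^{p}_{F}H^{\bullet}_{c}(X,DR(\mathcal{K}))$ for $p\le k$, obtain the de Rham--side vanishing from $\Omega^{p}(\mathcal{K})\simeq 0$ for $p\le k$ (which the paper simply cites from Popa--Shen--Vo rather than re-deriving; your duality sketch of this equivalence is not a proof, but that input is legitimately quotable), and transfer it to the Hodge filtration. The genuine problem in this part is the claimed identifications $Gr^{p}_{F}H^{i}_{c}(X,\C)\cong \mathbb{H}^{i-p}_{c}(X,\Omp)$ and $Gr^{p}_{F}\mathbb{H}^{i-n}_{c}(X,DR(\mathcal{K}))\cong \mathbb{H}^{i-p}_{c}(X,\Omega^{p}(\mathcal{K}))$ ``by strictness'': for non-proper $X$ these are false in general. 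The compactly supported hypercohomology of $Gr^{F}_{-p}DR(\mathcal{M})$ on $X$ computes the $E_{1}$-page of the \emph{de Rham} filtration of \ref{eq:deRham}, which need not degenerate and whose abutment only injects into the Hodge filtration (Proposition \ref{CompProp}); keeping these two filtrations apart is the whole point of the section. What is actually true, and what the paper proves, is that the vanishing of the de Rham $E_{1}$-terms gives $^{d}Gr^{p}_{F}H^{i-n}_{c}(X,DR(\mathcal{K}))=0$ for $p\le k$, and then Lemma \ref{Surj. Lemma} together with an induction on $p$ (once the lower graded pieces vanish, $F^{p}={}^{d}F^{p}=$ everything and the surjectivity argument repeats) yields $Gr^{p}_{F}=0$. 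You name these tools but only in passing and attached to the wrong half of the argument, so as written the first family is not established.

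For the second (``top'') family your route diverges from the paper's and is where the substantive gap lies. The paper does not use Poincar\'e--Verdier duality here: it uses the weight bound $Gr^{W}_{w}H^{i-n}_{c}(X,DR(\mathcal{K}))=0$ for $w>i$ (from \cite{AH2}) together with Hodge symmetry $\dim Gr^{a}_{F}Gr^{W}_{w}=\dim Gr^{w-a}_{F}Gr^{W}_{w}$ on each pure weight-graded piece; since $w\le i$, the vanishing $Gr^{w-i+p}_{F}Gr^{W}_{w}=0$ for $p\le k$ (a low $F$-degree statement) converts into $Gr^{i-p}_{F}Gr^{W}_{w}=0$, and summing over $w$ gives the top-degree vanishing. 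Your proposal to dualize $\mathcal{K}$ and chase the cone of $IC^{H}_{X}(n)\to\mathbb{D}(\Q^{H}_{X}[n])$ is only a sketch: you would need to control $Gr^{F}_{q}DR(\mathbb{D}\mathcal{K})$ in the relevant range, redo the de Rham-versus-Hodge comparison on the dual side (where the injections of Proposition \ref{CompProp} point the other way), and, crucially, you never invoke any weight information --- and some such input is needed, since nothing purely on the $F$-filtration side converts vanishing in low $F$-degrees into vanishing in high $F$-degrees. That conversion is the missing idea. Your ``in particular'' deduction is fine once both families of vanishings are in hand.
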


\begin{proof}
   
    By the exact triangle \ref{eq1}, there is a long exact sequence
    $$\cdots \rightarrow H^{i-n-1}_{c}(X, DR(\mathcal{K})) \rightarrow  H^{i}_{c}(X, \C) \rightarrow IH^{i}_{c}(X,\C) \rightarrow H^{i-n}_{c}(X, DR(\mathcal{K})) \rightarrow \cdots. $$
      If $X$ has pre-$k$-rational singularities, then by \cite[\S 9]{PopaShenVo}, the map $\phi_{p}$ in the triangle \ref{phimap} is a quasi-isomorphism for $p\leq k$. Therefore, we can conclude $\Omega^{p}(\mathcal{K}) \simeq 0$ for $p \leq k.$ For the de Rham spectral sequence, $E^{p,i-p-n}_{1}= H^{i-p}_{c}(X, \Omega^{p}(\mathcal{K})) =0$ for $p \leq k$ and $i \in \Z$. Thus, 
      $$^{d}Gr^{p}_{F}H^{i-n}_{c}(X, DR(\mathcal{K})) = 0 \quad \text{for $p \leq k$ and $i \in \Z$.}$$
      By Lemma \ref{Surj. Lemma} and induction on $k$, we have
    $$Gr^{p}_{F}H^{i-n}_{c}(X,DR(\mathcal{K})) = 0 \quad \text{for $p \leq k$ and $i \in \Z.$}$$
    By applying the functor $Gr^{W}_{w}(\bullet)$, when $p \leq k$,
    $$Gr^{p}_{F}Gr^{W}_{w}H^{i-n}_{c}(X,DR(\mathcal{K})) =  0 \quad \text{for any $i,w \in \Z.$} $$
    Note that $Gr^{W}_{w}H^{i-n}_{c}(X, DR(\mathcal{K})) = 0$ unless $ w \leq i$ \cite[Prop. 2.10]{AH2}. Therefore, for $p \leq k$,
    $$Gr^{w-i+p}_{F}Gr^{W}_{w}H^{i-n}_{c}(X,DR(\mathcal{K}))= 0 \quad \text{for any $i,w \in \Z.$}$$
    By Hodge duality,
    $$Gr^{i-p}_{F}Gr^{W}_{w}H^{i-n}_{c}(X,DR(\mathcal{K}))= 0 \quad \text{for any $i,w \in \Z.$}$$
     Thus,
      $$Gr^{i-p}_{F}H^{i-n}_{c}(X,DR(\mathcal{K})) = 0 \quad \text{for $p \leq k$ and $i \in \Z.$}$$
      
     For the last statement, it suffices to show $Gr^{W}_{w}H^{i-n}_{c}(X, DR(\mathcal{K}))) = 0$ for $i \leq 2k+1$ and $w \in \Z.$ Since $Gr^{W}_{w}H^{i-n}_{c}(X, DR(\mathcal{K}))) = 0$ for $w > i$, it suffices to consider the cases when $w \leq i \leq 2k+1$. Also, since $Gr^{p}_{F}H^{i-n}_{c}(X, DR(\mathcal{K}))) = 0$ for $p < 0,$ it suffices to consider the cases when $0 \leq w \leq i \leq 2k+1$.   Since $0\leq w \leq i\leq 2k+1,$ we have the following identities,
    $$Gr^{W}_{w}H^{i-n}_{c}(X, DR(\mathcal{K})) = \bigoplus_{0 \leq p \leq w} Gr^{p}_{F}Gr^{W}_{w}H^{i-n}_{c}(DR(\mathcal{K}))= \bigoplus_{0 \leq p \leq 2k+1} Gr^{p}_{F}Gr^{W}_{w}H^{i-n}_{c}(DR(\mathcal{K}))$$
    $$=\underbrace{\bigoplus_{0 \leq p \leq k}Gr^{p}_{F}Gr^{W}_{w}H^{i-n}_{c}(DR(\mathcal{K})) \oplus \bigoplus_{0 \leq p \leq k}Gr^{2k+1-p}_{F}Gr^{W}_{w}H^{i-n}_{c}(DR(\mathcal{K}))}_\text{$=0$ by the previous calculation}.$$
   
\end{proof}

\begin{rmk}\label{Rmk1}
    Note that the assumption of $X$ having pre-$k$-rational singularities in the previous theorem can be replaced with the weaker condition that $\phi_{p}:\Omp \rightarrow I\Omp$ is a quasi-isomorphism for $0 \leq p \leq k.$  This condition is known as $(*)_{k}$ in \cite{ParkPopa} or $HRH(X)\geq k$ in \cite{DirksOlaoRay}. 
    
    We also note the first natural map
    $$Gr^{p}_{F}H^{i}_{c}(X, \C) \rightarrow Gr^{p}_{F}IH^{i}_{c}(X, \C)$$ 
    was also shown to be an isomorphism  for $0 \leq p \leq k$ under the $HRH(X)\geq k$ condition by \cite{DirksOlaoRay}.
\end{rmk}

If $X$ is projective and has pre-$k$-rational singularities (or $(*)_{k}$ singularities), the theorem above gives us
$$H^{p,i-p}(X):=H^{i-p}(X, \Omp) \cong Gr^{p}_{F}IH^{i}(X,\C)=:IH^{p,i-p}(X) \quad \text{for $p \leq k$ and $i \in \Z.$}$$
$$H^{i-p,p}(X):=H^{p}(X, \underline{\Omega}^{i-p}_{X}) \cong Gr^{i-p}_{F}IH^{i}(X,\C)=:IH^{i-p,p}(X) \quad \text{for $p \leq k$ and $i \in \Z.$}$$
This was first shown by Park and Popa \cite[Thm. 7.1]{ParkPopa}. In fact, they show more in terms of the symmetery of the Hodge-Du Bois numbers using the $(*)_{k}$ condition. In the case when $X$ has pre-$k$-rational singularities, it was first shown by \cite[Cor. 4.1]{shenVenVo} that we have
$$\dim H^{p,i}(X) = \dim H^{i,p}(X) = \dim H^{n-p,i-n}(X) \quad \text{for $p \leq k$ and $0 \leq i \leq n.$}$$
In particular, if $X$ is a projective variety, then $IH^{i}_{c}(X,\Q) = IH^{i}(X,\Q)$ has a pure Hodge structure of weight $i$. With the assumptions of the previous theorem, we have the immediate corollary. 

\begin{cor}\cite{ParkPopa}
    If $X$ is a projective variety and has pre-$k$-rational singularities (or $(*)_{k}$ singularities), then $H^{i}(X, \Q)$ has a pure Hodge structure of weight $i$ for $i \leq 2k+2$.
\end{cor}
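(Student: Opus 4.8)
\emph{Proof proposal.} The plan is to deduce the corollary from Theorem~\ref{HodgeThm} together with the purity of intersection cohomology of a projective variety. Since $X$ is projective it is in particular proper, so $H^{i}_{c}(X,\Q)=H^{i}(X,\Q)$ and $IH^{i}_{c}(X,\Q)=IH^{i}(X,\Q)$, and, as recalled just before the statement, $IH^{i}(X,\Q)$ carries a pure Hodge structure of weight $i$ (purity of the pure Hodge module $IC^{H}_{X}$ on a projective variety, due to Saito). The natural comparison map $H^{i}(X,\Q)\to IH^{i}(X,\Q)$ is obtained by applying the cohomology of the de Rham realization to the canonical morphism $\Q^{H}_{X}[n]\to IC^{H}_{X}$ in $D^{b}MHM(X)$ appearing in the exact triangle \ref{eq1}; in particular it is a morphism of mixed Hodge structures.

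Next I would apply Theorem~\ref{HodgeThm} to $X$ (valid under the pre-$k$-rational hypothesis, and, by Remark~\ref{Rmk1}, already under the weaker condition $(*)_{k}$): the comparison map $H^{i}(X,\Q)\to IH^{i}(X,\Q)$ is an isomorphism for $i\le 2k+1$ and is injective for $i=2k+2$.

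It then remains to run a standard weight argument. For $i\le 2k+1$ the comparison map is an isomorphism of mixed Hodge structures whose target is pure of weight $i$, hence $H^{i}(X,\Q)$ is pure of weight $i$. For $i=2k+2$ we only obtain an injection of mixed Hodge structures $H^{i}(X,\Q)\hookrightarrow IH^{i}(X,\Q)$ with target pure of weight $i$; since morphisms of mixed Hodge structures are strictly compatible with the weight filtration $W$, and $Gr^{W}_{w}IH^{i}(X,\Q)=0$ for $w\ne i$, strictness gives $W_{i-1}H^{i}(X,\Q)=0$ and $W_{i}H^{i}(X,\Q)=H^{i}(X,\Q)$, i.e. $H^{i}(X,\Q)$ is pure of weight $i$. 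The argument has essentially no geometric content beyond Theorem~\ref{HodgeThm} and Saito's purity of $IC^{H}_{X}$; the only point requiring care --- the ``hard part'', such as it is --- is to make sure the comparison map is genuinely a morphism in the category of mixed Hodge structures and to apply strictness of $W$ correctly in the boundary case $i=2k+2$. I do not anticipate any real obstacle.
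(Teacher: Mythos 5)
Your proposal is correct and follows essentially the same route as the paper, which treats the corollary as immediate from Theorem~\ref{HodgeThm}: for projective $X$ one has $H^{i}_{c}=H^{i}$ and $IH^{i}_{c}=IH^{i}$ pure of weight $i$, and the comparison map of mixed Hodge structures is an isomorphism for $i\le 2k+1$ and injective for $i=2k+2$, so strictness of the weight filtration forces purity. Your careful handling of the boundary case $i=2k+2$ is exactly the point the paper leaves implicit.
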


\section{pre-$k$-rational singularities and differential forms} 
We will keep the same assumptions and notation from the previous section. We have $X$ to be an irreducible, normal, quasi-projective variety of dimension $n$ with singular locus $S$. 

\begin{lemma}\label{depthlemma}
      Assume $k \leq n-2$ and $I\Omp \simeq \cH^{0}(I\Omp)$ for $p \leq k$. Then
     $$\cH^{0}(F_{-k-1}DR(IC^{H}_{X}))=\cH^{n-k-1}(F_{-k-1}DR(IC^{H}_{X}[k+1 -n])) = 0 \quad \text{and} \quad \cH^{n-k-1}(I\underline{\Omega}^{k+1}_{X})=0.$$ 
\end{lemma}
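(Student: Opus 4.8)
The plan is to extract the vanishing from the Hodge-theoretic vanishing properties of the pure Hodge module $IC^H_X$, using the hypothesis $I\underline{\Omega}^p_X \simeq \cH^0(I\underline{\Omega}^p_X)$ for $p \le k$ together with Saito's strictness results for the Hodge filtration on $DR(IC^H_X)$. Recall that by definition $I\underline{\Omega}^p_X = Gr^F_{-p}DR(IC^H_X)[p-n]$, and that there is a finite decreasing filtration $F_\bullet DR(IC^H_X)$ whose graded pieces are the $Gr^F_{-p}DR(IC^H_X)$ for $0 \le p \le n$ (the range being controlled by the generation level of the filtration on the underlying $\cD$-module). Since $IC^H_X$ is a pure Hodge module whose perverse sheaf is $IC_X$, the complex $DR(IC^H_X) = IC_X$ (up to shift) is a perverse sheaf placed in a single perverse degree, and in particular satisfies the support condition: $\cH^j$ of it vanishes for $j$ outside the expected range, and more precisely $\cH^j(F_{-k-1}DR(IC^H_X))$ can only be nonzero for $j$ in the window determined by the support/cosupport conditions on $IC_X$.

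First I would set up the spectral sequence (or iterated cone) computing $\cH^*(F_{-k-1}DR(IC^H_X))$ from the graded pieces $Gr^F_{-p}DR(IC^H_X) = I\underline{\Omega}^p_X[n-p]$ for $0 \le p \le k+1$. For $p \le k$ the hypothesis gives $I\underline{\Omega}^p_X \simeq \cH^0(I\underline{\Omega}^p_X)$, a sheaf in degree $0$, so $Gr^F_{-p}DR(IC^H_X)$ is concentrated in cohomological degree $n-p \ge n-k$. Thus the only contributions to $\cH^j(F_{-k-1}DR(IC^H_X))$ in degrees $j \le n-k-1$ come from the bottom graded piece $Gr^F_{-(k+1)}DR(IC^H_X) = I\underline{\Omega}^{k+1}_X[n-k-1]$, i.e. from $\cH^{j-(n-k-1)+0} = \cH^{j-n+k+1}(I\underline{\Omega}^{k+1}_X)$, which for $j = n-k-1$ is exactly $\cH^0(I\underline{\Omega}^{k+1}_X)$ — and this is a subsheaf of (in fact, by the earlier remark, isomorphic to) $f_*\Omega^{k+1}_{\tilde X}$, a torsion-free sheaf of positive rank, hence nonzero on the smooth locus but that is fine; what I actually need is the vanishing in degree $\le n-k-2$, wait — let me re-read: the claim is $\cH^{n-k-1}(F_{-k-1}DR(IC^H_X)) = 0$. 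So I need that the degree exactly $n-k-1$ cohomology vanishes, which forces me to also kill the $\cH^0(I\underline{\Omega}^{k+1}_X)$ contribution. Here is where the support condition for $IC_X$ enters: $F_{-k-1}DR(IC^H_X)$ is a subcomplex of $DR(IC^H_X) = IC_X[\text{shift}]$, and because $k+1 \le n-1$, Saito's strictness (the Hodge filtration is strict, so $\cH^j(F_{-k-1}DR)$ injects into $\cH^j(DR)$) combined with the fact that $IC_X$ has no cohomology sheaf in the relevant degree over the whole of $X$ (the support conditions $\dim \supp \cH^{-m}(IC_X) < m$ for $m < n$, applied with the smoothness of $X$ outside $S$ and $\codim S \ge 2$ which holds for normal $X$) gives the vanishing. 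Finally, $\cH^{n-k-1}(I\underline{\Omega}^{k+1}_X) = 0$ follows by combining the above long exact sequence / spectral sequence with the already-established vanishing of $\cH^{n-k-1}(F_{-k-1}DR(IC^H_X))$ and the concentration of the lower graded pieces in higher degree.

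The main obstacle I anticipate is making precise the comparison between $\cH^j(F_{-k-1}DR(IC^H_X))$ and $\cH^j(DR(IC^H_X))$: one needs that the inclusion $F_{-k-1}DR \hookrightarrow DR$ induces an injection on the relevant cohomology sheaf, which is Saito's strictness of the Hodge filtration for pure Hodge modules, and then one needs to know that $\cH^{n-k-1}(DR(IC^H_X)) = \cH^{n-k-1}(IC_X[\,\cdot\,])$ already vanishes in this degree because $k+1 < n$ and $X$ is normal (so the non-intersection-complex cohomology of $IC_X$ is supported in codimension $\ge 2$ and in a degree range excluded by $n-k-1$ when $k \le n-2$). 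I would also need to double-check the indexing conventions for $DR$ and for the shift $[k+1-n]$ so that the two displayed identities — the one for $\cH^0(F_{-k-1}DR(IC^H_X))$ and its reindexed form $\cH^{n-k-1}(F_{-k-1}DR(IC^H_X[k+1-n]))$ — match up, which is purely bookkeeping. The torsion-freeness / reflexivity of $\cH^0(I\underline{\Omega}^{k+1}_X) \cong f_*\Omega^{k+1}_{\tilde X}$ on the normal variety $X$ may be needed as an auxiliary input to rule out the bottom contribution surviving, but I expect the support condition to do the job directly.
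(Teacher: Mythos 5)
Your proposal has a structural indexing error that undermines the whole argument. In Saito's convention the filtration $F_{p}DR$ is \emph{increasing} in $p$ (this is visible in the paper's own short exact sequences $0 \to F_{-k-1} \to F_{-k} \to Gr^{F}_{-k} \to 0$), so $F_{-k-1}DR(IC^{H}_{X})$ is the subcomplex of $DR(IC^{H}_{X})$ whose graded pieces are $Gr^{F}_{-q}DR(IC^{H}_{X}) = I\underline{\Omega}^{q}_{X}[n-q]$ for $q \geq k+1$, not for $q \leq k+1$. Consequently the contribution of the top graded piece to $\cH^{0}(F_{-k-1}DR(IC^{H}_{X}))$ is $\cH^{0}(I\underline{\Omega}^{k+1}_{X}[n-k-1]) = \cH^{n-k-1}(I\underline{\Omega}^{k+1}_{X})$ --- the \emph{top} cohomology sheaf, which is precisely one of the things the lemma asserts vanishes --- and not $\cH^{0}(I\underline{\Omega}^{k+1}_{X}) \cong f_{*}\Omega^{k+1}_{\tX}$, the nonzero sheaf you were trying to kill. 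The other graded pieces contribute $\cH^{n-q}(I\underline{\Omega}^{q}_{X})$ for $q \geq k+2$, about which the hypothesis (which only concerns $p \leq k$) says nothing directly, so the spectral sequence of the filtered subcomplex does not close up the way you describe.

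The fallback you invoke to repair this --- that ``Saito's strictness'' makes $\cH^{j}(F_{-k-1}DR(IC^{H}_{X})) \to \cH^{j}(DR(IC^{H}_{X}))$ injective, so that the support conditions on $IC_{X}$ finish the proof --- is not a valid principle. Strictness for pure Hodge modules concerns direct images (e.g.\ degeneration of the Hodge--de Rham spectral sequence on global cohomology), not the cohomology \emph{sheaves} of the filtered pieces of $DR$; at the sheaf level the filtration spectral sequence does not degenerate in general, which is exactly why $I\underline{\Omega}^{p}_{X}$ can have higher cohomology sheaves even though $DR(IC^{H}_{X}) \simeq IC_{X}$ is concentrated in a narrow range. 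If the injectivity you assert were true, the lemma would hold with no hypothesis on $I\underline{\Omega}^{p}_{X}$ at all, which it does not. The paper's actual proof is an induction on $k$: the base case $k=0$ is \cite[Lemma 6.8]{ks}, and the inductive step chains the two short exact sequences $0 \to F_{-k-1} \to F_{-k} \to Gr^{F}_{-k} \to 0$ and $0 \to F_{-k-2} \to F_{-k-1} \to Gr^{F}_{-k-1} \to 0$, where the hypothesis enters only through $\cH^{n-k-1}(I\underline{\Omega}^{k}_{X}) = 0$ (valid because $I\underline{\Omega}^{k}_{X}$ is a sheaf in degree $0$ and $n-k-1 \geq 1$). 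You would need to restructure your argument along these lines; neither the support conditions on $IC_{X}$ nor torsion-freeness of $f_{*}\Omega^{k+1}_{\tX}$ plays any role.
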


\begin{proof}
   We will proceed by induction on $k$. When $k = 0$, the lemma holds by \cite[Lemma 6.8]{ks}. Let $k \geq 1$. Assume that if $I\Omp \simeq \cH^{0}(I\Omp)$ for $p \leq k-1$, then the lemma holds. Suppose that $I\Omp \simeq \cH^{0}(I\Omp)$ for $p \leq k$.  We need to show
     $$\cH^{n-k-1}(F_{-k-1}DR(IC^{H}_{X}[k+1-n])) = 0 \quad \text{and} \quad \cH^{n-k-1}(I\underline{\Omega}^{k+1}_{X})=0.$$
    Consider the short exact sequences
    $$0 \rightarrow F_{-k-1}(IC^{H}_{X}[k-n]) \rightarrow F_{-k}(IC^{H}_{X}[k-n]) \rightarrow I\underline{\Omega}^{k}_{X} \rightarrow 0$$
    $$0 \rightarrow F_{-k-2}(IC^{H}_{X}[k+1-n]) \rightarrow F_{-k-1}(IC^{H}_{X}[k+1-n]) \rightarrow I\underline{\Omega}^{k+1}_{X} \rightarrow 0.$$
    If we apply cohomology to the first exact sequence, then
    $$\underbrace{\cH^{n-k-1}(I\underline{\Omega}^{k}_{X})}_\text{$=0$ by assumption} \rightarrow \cH^{n-k}( F_{-k-1}(IC^{H}_{X}[k-n])) \rightarrow \underbrace{\cH^{n-k}(F_{-k}(IC^{H}_{X}[k-n]))}_\text{$=0$ by induction hypothesis}.$$
    If we apply cohomology to the second exact sequence, then
    $$ \underbrace{\cH^{n-k-1}( F_{-k-1}(IC^{H}_{X}[k+1-n]))}_\text{$=0$ by the above calculation} \rightarrow \cH^{n-k-1}(I\underline{\Omega}^{k+1}_{X}) \rightarrow \underbrace{\cH^{n-k}( F_{-k-2}(IC^{H}_{X}[k+1-n]))}_\text{$=0$ by definition of the de Rham complex}.$$
\end{proof}

\begin{prop}\label{depthProp}
 Assume $k \leq n-2$ and $I\Omp \simeq \cH^{0}(I\Omp)$ for $p \leq k$. Then $\depth(I\underline{\Omega}^{p}_{X}) \geq k+2$ for $0 \leq p \leq n.$

\end{prop}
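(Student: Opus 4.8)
The plan is to prove the depth bound for $I\Omp$ by a downward induction on $p$, starting from the top and using the fact that $I\Omp$ is a single sheaf (concentrated in degree $0$) for $p \le k$, together with the vanishing established in Lemma~\ref{depthlemma}. Recall that $\depth(M) = \min\{i \mid \cH^{-i}(\bR\cH om(M,\omega^\bullet)) \ne 0\}$ locally, so $\depth(I\Omp) \ge k+2$ is equivalent to the vanishing $\cH^{-i}(\bR\cH om_X(I\Omp, \omega_X^\bullet)) = 0$ for $i \le k+1$, stalkwise at every closed point; equivalently, via local duality, to $\cH^j(\bR\Gamma_m((I\Omp)_x)) = 0$ for $j \le k+1$.

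First I would translate the question into a statement about the filtered de Rham complex of $IC^H_X$. Writing $F_{-p}DR(IC^H_X)$ for the Hodge-filtered pieces, the graded quotients are (shifts of) the $I\underline{\Omega}^q_X$, and the key input is that $DR(IC^H_X)$ is, up to shift, self-dual: $\bD(IC^H_X) \simeq IC^H_X$ as pure Hodge modules, which on the level of de Rham complexes with Hodge filtration gives a duality exchanging $F_{-p}DR$ with a corresponding piece of the dual filtration. Concretely, I expect to use that $\bR\cH om_X(I\Omp, \omega_X^\bullet[-n]) \simeq I\underline{\Omega}^{n-p}_X$ (this is the statement $I\Omp \to \D(\underline{\Omega}^{n-p}_X)[-\dim X]$ being the relevant duality isomorphism on the $IC$-side, coming from $\bD IC^H_X \simeq IC^H_X$), so that $\cH^{-i}(\bR\cH om(I\Omp,\omega^\bullet)) = \cH^{n-i}(I\underline{\Omega}^{n-p}_X)$. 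Then $\depth(I\Omp) \ge k+2$ becomes the statement that $\cH^{j}(I\underline{\Omega}^{n-p}_X) = 0$ for $j \ge n-k-1$, i.e. $\cH^j(I\underline{\Omega}^q_X) = 0$ for $j \ge n-k-1$ and all $q$ (since $p$ ranges over $0,\dots,n$, so does $q = n-p$).

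So the proof reduces to showing $\cH^j(I\underline{\Omega}^q_X) = 0$ for $j \ge n-k-1$ and $0 \le q \le n$. I would prove this by a descending induction on $q$. For $q \le k$, the hypothesis $I\underline{\Omega}^q_X \simeq \cH^0(I\underline{\Omega}^q_X)$ makes the vanishing immediate (the complex sits in degree $0$, and $n-k-1 \ge 1$ since $k \le n-2$). For $q = k+1$, Lemma~\ref{depthlemma} gives $\cH^{n-k-1}(I\underline{\Omega}^{k+1}_X) = 0$; one also needs $\cH^j(I\underline{\Omega}^{k+1}_X) = 0$ for $j > n-k-1$, which should follow from the general bound that $I\underline{\Omega}^q_X$ lives in cohomological degrees $\le n - q$ (amplitude of $Gr^F DR$ of a Hodge module), since $n - (k+1) < n-k-1$ is false — wait, $n-k-1 = n-(k+1)$, so the top possibly-nonzero degree is exactly $n-k-1$, and Lemma~\ref{depthlemma} kills precisely that. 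For $q \ge k+2$ I would run the induction using the short exact sequences $0 \to F_{-q-1}DR(IC^H_X) \to F_{-q}DR(IC^H_X) \to I\underline{\Omega}^q_X[n-q] \to 0$ exactly as in the proof of Lemma~\ref{depthlemma}: the long exact sequence relates $\cH^j(I\underline{\Omega}^q_X)$ to cohomology of $F_{-q}DR(IC^H_X)$ and $F_{-q-1}DR(IC^H_X)$, and the amplitude/coherence bounds on these filtered pieces together with the already-established vanishing for smaller $q$ force $\cH^j(I\underline{\Omega}^q_X) = 0$ in the desired range. The main obstacle I anticipate is pinning down the precise duality statement $\bR\cH om_X(I\Omp, \omega_X^\bullet[-n]) \simeq I\underline{\Omega}^{n-p}_X$ with the correct shift and verifying it is an honest isomorphism (not just a map) — this is where the purity and self-duality of $IC^H_X$ must be invoked carefully, and where I would lean on \cite{ks} and \cite{saito5}; once that is in hand, the cohomological bookkeeping via the filtration short exact sequences is routine and parallels Lemma~\ref{depthlemma}.
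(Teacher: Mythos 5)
Your proposal is correct and follows essentially the same route as the paper: local duality plus the self-duality $\bR \cH om_{\cO_X}(I\Omp,\omega_X^\bullet)\simeq I\underline{\Omega}^{n-p}_X[n]$ coming from the purity of $IC^H_X$, the amplitude bound $\cH^j(I\underline{\Omega}^q_X)=0$ for $j>n-q$, the degree-$0$ concentration hypothesis for $q\le k$, and Lemma~\ref{depthlemma} for the edge case $q=k+1$, $j=n-k-1$. The only difference is cosmetic: your descending induction on $q$ for $q\ge k+2$ is unnecessary, since the amplitude bound alone already gives the vanishing there.
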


\begin{proof}
 For a closed point $x \in X$, we need to show
    $$\cH^{i}(\bR \underline{\Gamma_{x}}(I\underline{\Omega}^{p}_{X})) = 0 \quad \text{for $i \leq k+1.$}$$
     By local duality,
$$\cH^{i}(\bR \underline{\Gamma_{x}}(I\Omp)_{x}) \cong Hom(\cH^{-i}(\bR \cH om_{\cO_{X}}( I\Omp, \omega^{\bullet}_{X}))_{x}, I_{x}),$$
where $I_{x}$ is the injective hull of the residue field at $x \in X$. Since $IC^{H}_{X}$ is a pure (polarizable) Hodge module of weight $n$, there is a quasi-isomorphism 
$$\bR \cH om_{\cO_{X}}(I\Omp, \omega^{\bullet}_{X})) \cong I \underline{\Omega}^{n-p}_{X}[n].$$
So, we have an isomorphism
$$\cH^{i}(\bR \underline{\Gamma_{x}}(I\Omp)_{x}) \cong Hom(\cH^{n -i}(I\underline{\Omega}^{n-p}_{X})_{x} , I_{x}).$$
If $(n -i) + (n-p) >n$, then $\cH^{n -i}(I\underline{\Omega}^{n-p}_{X}) = 0$. So, it suffices to consider the case when $(n-i) + (n-p) \leq n.$ Equivalently, it suffices to consider when $n-p\leq i.$ First, consider when  $n-p \leq i \leq k \leq n-2$. Then, by our assumption,
$$\cH^{n-i-1}I\underline{\Omega}^{n-p}_{X} \cong \cH^{n-i-1}(\cH^{0}(I\underline{\Omega}^{n-p}_{X})) =0.$$
Therefore,
$$\cH^{i}(\bR \underline{\Gamma_{x}}(I\Omp)_{x}) \cong 0 \quad \text{for $n-p \leq k$ and $i \leq k+1.$}$$
The final case is when $n-p = i = k+1$. In this case,
$$\cH^{n-i}(I\underline{\Omega}^{n-p}_{X}) \cong \cH^{n-k-1}(I\underline{\Omega}^{k+1}_{X}).$$
Now apply the previous lemma.

\end{proof}

\begin{thm}\label{depthThm2}
    If $X$ has pre-$k$-rational singularities for $k \leq n -2$, then
    $$\depth(\underline{\Omega}^{p}_{X}) \geq k+2  \quad \text{for $0 \leq p \leq n.$}$$
    Furthermore, for $0 \leq p \leq n$, the map
    $$\cH^{i}(\Omp) \rightarrow \cH^{i}(I\Omp)$$
    is an isomorphism for $i \leq k$ and injective for $i = k+1$. 
\end{thm}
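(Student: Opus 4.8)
The strategy is to deduce Theorem \ref{depthThm2} from Proposition \ref{depthProp} together with the comparison between $\Omp$ and $I\Omp$ via the exact triangle \ref{phimap}, mimicking the $k=0$ case from \cite{AH2}. First I would observe that since $X$ has pre-$k$-rational singularities, \cite[\S 9]{PopaShenVo} gives that $\phi_p : \Omp \to I\Omp$ is a quasi-isomorphism for $p \leq k$, so in particular $\cH^i(\Omp) \cong \cH^i(I\Omp)$ for all $i$ when $p \leq k$. I also need the hypothesis ``$I\Omp \simeq \cH^0(I\Omp)$ for $p \leq k$'' required by Proposition \ref{depthProp}: for $p \leq k$ this follows from $\Omp \simeq I\Omp$ combined with pre-$k$-Du Bois ($\cH^i(\Omp) = 0$ for $i>0$, $p \leq k$), which is implied by pre-$k$-rational via the implications recalled in the introduction. (One should double-check the codimension bookkeeping here, or invoke the characterization that $\phi_p$ a quasi-isomorphism plus the vanishing $\cH^{>0}$ gives $I\Omp$ concentrated in degree $0$.) With that hypothesis in hand, Proposition \ref{depthProp} yields $\depth(I\Omp) \geq k+2$ for \emph{all} $0 \leq p \leq n$.

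Next, for $p \leq k$ the quasi-isomorphism $\Omp \simeq I\Omp$ immediately upgrades this to $\depth(\Omp) \geq k+2$ for $p \leq k$. The remaining work is the range $k < p \leq n$. Here I would use the triangle \ref{phimap} and the vanishing $\Omega^p(\mathcal{K}) \simeq 0$ is no longer available, so instead I would run a descending induction on $p$, or rather argue directly: the cokernel/kernel of $\phi_p$ is controlled by $\Omega^p(\mathcal K) = Gr^F_{-p}DR(\mathcal K)[p-n]$, and one knows (Saito's strictness, or the weight/support estimates on $\mathcal K$) that $\mathcal K$ is supported on $S$ with $\codim_X S \geq 2k+2$. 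Taking the long exact sequence of local cohomology $\bR\underline{\Gamma_x}(-)$ applied to \ref{phimap}, and feeding in $\depth(I\Omp)\geq k+2$ from Proposition \ref{depthProp} plus a lower bound on the depth (equivalently an estimate on $\cH^i$ of local cohomology) of $\Omega^p(\mathcal K)$ coming from its support in codimension $\geq 2k+2 \geq k+2$, one concludes $\cH^i(\bR\underline{\Gamma_x}(\Omp)) = 0$ for $i \leq k+1$, i.e. $\depth(\Omp) \geq k+2$.

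For the second assertion — that $\cH^i(\Omp) \to \cH^i(I\Omp)$ is an isomorphism for $i \leq k$ and injective for $i = k+1$ — for $p \leq k$ this is trivial since the map is a quasi-isomorphism. For $k < p \leq n$ I would again use triangle \ref{phimap}: it suffices to show $\cH^i(\Omega^p(\mathcal K)[-1]) = 0$ for $i \leq k$ and $\cH^{k+1}(\Omega^p(\mathcal K)[-1]) = 0$, equivalently $\cH^i(\Omega^p(\mathcal K)) = 0$ for $i \leq k$. Since $\Omega^p(\mathcal K) = Gr^F_{-p}DR(\mathcal K)[p-n]$ and $\mathcal K$ is a complex of mixed Hodge modules supported on $S$, the complex $Gr^F_{-p}DR(\mathcal K)$ lives in a range of degrees controlled by $\dim S$ and the structure of $DR$; translating through the shift, the relevant cohomology sheaves vanish in the stated range because $\codim_X S \geq 2k+2$ forces $Gr^F_{-p}DR(\mathcal K)$ to vanish in low enough degrees — this is exactly the kind of estimate used implicitly in \cite{AH2} for $k=0$ and in \cite{ks}.

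\textbf{Main obstacle.} The crux is getting the vanishing/depth estimate for $\Omega^p(\mathcal K)$ in the range $k < p \leq n$ right: one must carefully track how the $2k+2$ codimension bound on $S$ (which is \emph{not} part of the pre-$k$-rational hypothesis in Definition-level generality — pre-$k$-rational only asks for the cohomology vanishing, not the codimension bound) interacts with the degree range of $Gr^F_{-p}DR(\mathcal K)$ after the shift by $p - n$. If the codimension bound is genuinely unavailable, the argument must instead extract everything from $\Omega^q(\mathcal K) \simeq 0$ for $q \leq k$ (which \emph{is} available) via a spectral-sequence or inductive argument on the truncations of $\mathcal K$, and that bootstrap — rather than the application of Proposition \ref{depthProp} — is where the real care is needed.
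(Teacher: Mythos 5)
Your outline handles the easy half correctly: for $p \leq k$ the quasi-isomorphism $\phi_p$ reduces everything to Proposition \ref{depthProp}, and you correctly identify that the whole theorem comes down to showing $\cH^{i}(\Omega^{p}(\mathcal{K})) = 0$ for $i \leq k$ and controlling $\cH^{k+1}_{x}(\Omp)$ for \emph{all} $p$, including $p > k$. But the mechanism you propose for that range does not work, and you essentially admit as much in your ``main obstacle'' paragraph. First, the bound $\codim_{X}(S) \geq 2k+2$ is not available: it is part of the definition of $k$-rational, not pre-$k$-rational, so any argument resting on it proves a weaker theorem. Second, even if it were available, support of $\mathcal{K}$ in high codimension gives an \emph{upper} bound on the dimension of the cohomology sheaves of $\Omega^{p}(\mathcal{K})$ (hence on the top degree of local cohomology), not the \emph{lower} bound on depth / vanishing of $\cH^{i}$ in low degrees that you need --- a skyscraper sheaf has arbitrarily high support codimension and depth $0$. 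Your fallback (``a spectral-sequence or inductive argument on the truncations of $\mathcal{K}$'') is not a proof; it is a restatement of the problem.

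The idea you are missing is global and Hodge-theoretic, not local. The paper runs an induction on $k$ (base case $k=0$ from \cite{AH2}) and, at each stage, kills $\cH^{k}(\Omega^{p}(\mathcal{K}))$ for \emph{every} $p$ using Theorem \ref{HodgeThm}: the Hodge-duality half of that theorem gives $Gr^{i-k}_{F}Gr^{W}_{w}H^{i-n}_{c}(X, DR(\mathcal{K})) = 0$, i.e. $Gr^{p}_{F}H^{k+p-n}_{c}(X, DR(\mathcal{K})) = 0$ for all $p$ --- this is precisely how the range $p > k$ gets covered. One then takes $X$ affine, uses the induction hypothesis to identify $H^{k}_{c}(X,\Omega^{p}(\mathcal{K}))$ with $\Gamma_{c}(X,\cH^{k}(\Omega^{p}(\mathcal{K})))$ and $H^{k}(X,\Omega^{p}(\mathcal{K}))$ with $\Gamma(X,\cH^{k}(\Omega^{p}(\mathcal{K})))$, and observes that the composite $\Gamma_{c} \to Gr^{p}_{F}H^{k+p-n}_{c}(X,DR(\mathcal{K})) \to \Gamma$ is the natural injection, forcing $\Gamma_{c}(X,\underline{\Gamma_{x}}(\cH^{k}(\Omega^{p}(\mathcal{K})))) = 0$ and hence $\cH^{k+1}_{x}(\Omp)=0$. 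Finally, to upgrade this pointwise vanishing to $\cH^{k}(\Omega^{p}(\mathcal{K})) = 0$ (needed for the isomorphism/injectivity statement), the paper shows $\dim\supp \cH^{k}(\Omega^{p}(\mathcal{K})) = 0$ by a second induction on $\dim X$ via general hyperplane sections, the restriction triangle of \cite[Lemma 6.6]{ParkPopa}, and Nakayama's lemma. None of these three ingredients --- the weight/Hodge-duality vanishing, the affine $\Gamma_{c}$ versus $\Gamma$ comparison, and the hyperplane-section induction --- appears in your proposal, so the gap is genuine.
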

\begin{proof}
    The proof will be by induction on $k.$ The case when $k = 0$ was shown in \cite{AH2}. Indeed, it was shown in \cite{AH2} that whenever $X$ has rational singularities $\depth(\Omp) \geq 2$ for $0 \leq p \leq n$. Furthermore, we have $\depth_{S}(\Omp) \geq 2$ for $0 \leq p \leq n$. This is equivalent to $\cH^{i}_{S}(\Omp):=\cH^{i}(\bR \underline{\Gamma_{S}}(\Omp)) = 0$ for $i \leq 1.$ If we apply the functor $\bR \underline{\Gamma_{S}}$ to the exact triangle
     $$
        \bT \underline{\Omega}^{p}_{X} \ar[r, "\phi_{p}"] &  I\Omp \ar[r] & \Omega^{p}(\mathcal{K}) \ar[r, "+1"] & \hfill, \eT
   $$
    and take cohomology, we obtain the long exact sequence
    $$0 \rightarrow\cH^{0}_{S}(\Omp) \rightarrow \cH^{0}_{S}(I\Omp) \rightarrow \cH^{0}_{S}(\Omega^{p}(\mathcal{K})) \rightarrow \cH^{1}_{S}(\Omp) \rightarrow \cdots.$$
    We have $\cH^{0}_{S}(I\Omp) = \cH^{0}_{S}(\cH^{0}(I\Omp)) = 0$ and $\cH^{0}_{S}(\Omp) = \cH^{1}_{S}(\Omp) = 0.$ Thus, since $\Omega^{p}(\mathcal{K)}$ is supported on $S$,
     $$\cH^{0}_{S}(\Omega^{p}(\mathcal{K})) =  \cH^{0}(\Omega^{p}(\mathcal{K})) = 0.$$
    Since the map $\cH^{0}(\Omp) \rightarrow \cH^{0}(I\Omp)$ is always injective, we have the theorem to hold when $k = 0$.
    
    Since the problem is local, we may assume $X$ is affine. Let $k \geq 1$ and assume the statement holds for a variety with pre-$(k-1)$-rational singularities. Suppose $X$ has pre-$k$-rational singularities. We have the long, exact sequence
    $$
        \cdots \rightarrow \cH^{i}(\Omp) \rightarrow  \cH^{i}(I\Omp) \rightarrow \cH^{i}(\Omega^{p}(\mathcal{K})) \rightarrow \cH^{i+1}(\Omp) \rightarrow \cdots.
  $$
    If $x \in X$ is a closed point, we also have the long exact sequence of local cohomology
    $$\cdots \rightarrow \cH^{i}_{x}(\Omp) \rightarrow  \cH^{i}_{x}(I\Omp) \rightarrow \cH^{i}_{x}(\Omega^{p}(\mathcal{K})) \rightarrow \cH^{i+1}_{x}(\Omp) \rightarrow \cdots. $$
    Recall, by the results of \cite{shenVenVo} and \cite{PopaShenVo}, if $X$ has pre-$k$-rational singularities, then the maps
    $$\cH^{0}(\Omp) \rightarrow \Omp \rightarrow I\Omp \rightarrow \D(\underline{\Omega}^{n-p}_{X})$$
    are quasi-isomorphisms for $0 \leq p \leq k$. Furthermore, since $X$ has rational singularities, $\cH^{0}(\Omp) \cong \cH^{0}(I\Omp)$ \cite[Thm. 1.4, Cor. 1.12]{ks} (a proof is also given in \cite{AH2}). By the induction hypothesis, and Proposition \ref{depthProp}, $\cH^{i}_{x}(\Omp) = \cH^{i}_{x}(I \Omp) = 0$ for $i \leq k.$ We also have $\cH^{k+1}_{x}(I\Omp) = 0$. Therefore,
    $$\cH^{k}_{x}(\Omega^{p}(\mathcal{K})) \cong \cH^{k+1}_{x}(\Omp).$$
    So it suffices to show $\cH^{k}_{x}(\Omega^{p}(\mathcal{K}))=0$. Also from the induction hypothesis, $\cH^{i}(\Omega^{p}(\mathcal{K}))= 0$ if  $i\leq k-1.$ Thus,
$$\cH^{k}_{x}(\Omega^{p}(\mathcal{K})) \cong 
\underline{\Gamma_{x}}(\cH^{k}(\Omega^{p}(\mathcal{K}))).$$
    By Theorem \ref{HodgeThm}, $Gr^{p}_{F}H^{k +p-n}_{c}(X, DR(\mathcal{K})) = 0$ for $p \in \Z$. Recall that there are natural maps
$$H^{k}_{c}(X, \Omega^{p}(\mathcal{K})) \rightarrow Gr^{p}_{F}H^{k+p-n}_{c}(X, DR(\mathcal{K})) \rightarrow H^{k}(X, \Omega^{p}(\mathcal{K})).$$
By the induction hypothesis,
$$H^{k}_{c}(X, \Omega^{p}(\mathcal{K})) \cong \Gamma_{c}(X, \cH^{k}(\Omega^{p}(\mathcal{K}))) \quad \text{and} \quad H^{k}(X, \Omega^{p}(\mathcal{K})) \cong \Gamma(X, \cH^{k}(\Omega^{p}(\mathcal{K}))).$$
As a consequence, we have a commutative diagram 
$$ \bT \Gamma_{c}(X, \cH^{k}(\Omega^{p}(\mathcal{K}))) \arrow[r] \arrow[hook, dr] & Gr^{p}_{F}H^{k+p-n}_{c}(X, DR(\mathcal{K})) \arrow[d] \\
& \Gamma(X, \cH^{k}(\Omega^{p}(\mathcal{K}))). \eT$$
So the map
$$\Gamma_{c}(X, \cH^{k}(\Omega^{p}(\mathcal{K}))) \rightarrow Gr^{p}_{F}H^{k+p-n}_{c}(X, DR(\mathcal{K}))$$
must be injective, and we obtain
$$\Gamma(X, \underline{\Gamma_{x}}(\cH^{k}(\Omega^{p}(\mathcal{K}))) = \Gamma_{c}(X, \underline{\Gamma_{x}}(\cH^{k}(\Omega^{p}(\mathcal{K}))) \subseteq Gr^{p}_{F}H^{k+p-n}_{c}(X, DR(\mathcal{K}))= 0.$$
Since $X$ is affine, we can conclude 
$$\cH^{k +1}_{x}(\underline{\Omega}^{p}_{X}) \cong \cH^{k}_{x}(\Omega^{p}(\mathcal{K})) \cong \underline{\Gamma_{x}}(\cH^{k}(\Omega^{p}(\mathcal{K}))= 0.$$

To finish the proof, we need to show $\cH^{k}(\Omega^{p}(\mathcal{K})) = 0$ for $0 \leq p \leq n.$ It suffices to show the  $\dim \supp{\cH^{k}(\Omega^{p}(\mathcal{K}))} = 0$ because we showed 
$$ \underline{\Gamma_{x}}(\cH^{k}(\Omega^{p}(\mathcal{K}))) = 0 \quad \text{for all closed points $x \in X.$}$$
The proof will be by induction on $\dim X = n.$ If $\dim X = 2$, then $X$ has isolated singularities and $\dim \supp{\cH^{k}(\Omega^{p}(\mathcal{K}))} = 0$. Let $n \geq 3$ and assume the statement holds for a variety with dimension less than or equal to  $n-1$. Suppose $\dim X =n$ and $X$ has pre-$k$-rational singularities. For a sufficiently general hyperplane section $L$, $L$ is normal and also has pre-$k$-rational singularities \cite{shenVenVo}. Let
$$\mathcal{K}_{L}:= Cone(\Q^{H}_{L}[n-1] \rightarrow IC^{H}_{L}).$$
By \cite[Lemma 6.6]{ParkPopa}, we have the exact triangle
$$\bT \Omega^{p-1}(\mathcal{K}_{L})) \otimes \cO_{L}(-L) \arrow[r] & \Omega^{p}(\mathcal{K}) \otimes \cO_{L} \arrow[r] & \Omega^{p}(\mathcal{K}_{L}) \arrow[r,"+1"] & \hfill \eT$$
and the long exact sequence
$$\cdots \rightarrow \cH^{i}(\Omega^{p-1}(\mathcal{K}_{L}))) \otimes \cO_{L}(-L) \rightarrow \cH^{i}( \Omega^{p}(\mathcal{K})) \otimes \cO_{L} \rightarrow \cH^{i}(\Omega^{p}(\mathcal{K}_{L})) \rightarrow  \cdots$$
By the induction hypothesis, $\cH^{k}(\Omega^{p}(\mathcal{K}_{L})) = 0$ for all values of $p.$ Therefore, by the long exact sequence above, $\cH^{k}(\Omega^{p}(\mathcal{K})) \otimes \cO_{L} = 0$. The natural map
    $$\cH^{k}(\Omega^{p}(\mathcal{K})) \otimes \cO_{X}(-L) \rightarrow  \cH^{k}(\Omega^{p}(\mathcal{K}))$$
    must be surjective. Let $B = \supp{\cH^{k}(\Omega^{p}(\mathcal{K}))})$ and suppose that $\dim B \geq 1$. We may choose $L$ such that $B \cap L \neq \emptyset$. If we localize at any closed point $x \in B \cap L$, there is a surjective map
    $$\cH^{k}(\Omega^{p}(\mathcal{K}))_{x} \otimes \cO_{X}(-L)_{x} \rightarrow  \cH^{k}(\Omega^{p}(\mathcal{K}))_{x}$$
    Since $\cH^{k}(\Omega^{p}(\mathcal{K}))_{x}$ is a coherent, we can conclude that $\cH^{k}(\Omega^{p}(\mathcal{K}))_x = 0$  by applying Nakayama's lemma. Contradicting that $x \in B$. Hence $\dim B =0$.
\end{proof}

\begin{rmk}\label{RHMrmk}
    If $X$ has pre-$k$-rational singularities, it was shown by \cite[Prop. 9.4]{PopaShenVo} that the map
    $$\underline{\Omega}_{X}^{p} \rightarrow \underline{\Omega}^{p}_{X} $$
     is a quasi-isomorphism for $p \leq k$. From the theorem above, if $X$ has pre-$k$-rational singularities, then the natural map
    $$\cH^{i}(\underline{\Omega}_{X}^{p}) \rightarrow \cH^{i}(I\underline{\Omega}^{p}_{X})$$
    is an isomorphism for $i \leq k$ and injective for $i = k+1$ for all values of $p.$ In particular, we see that pre-$k$-rational singularities imply the map
    $$\underline{\Omega}_{X}^{p} \rightarrow I\underline{\Omega}^{p}_{X}$$
    is a quasi-isomorphism for $n-k-1 \leq p \leq n.$ This holds because $\cH^{i}(\Omp) = \cH^{i}(I\Omp)=0$ for $i + p >n$ and the map $\cH^{n-p}(\Omp) \rightarrow \cH^{n-p}(I\Omp)$ is surjective. Thus, the map
     $$\underline{\Omega}_{X}^{p} \rightarrow I\underline{\Omega}^{p}_{X}$$
    is a quasi-isomorphism for all values of $p$ whenever $X$ has pre-$k$-rational singularities and $n \leq 2k+2.$
    
    Park and Popa \cite[Prop. 7.4]{ParkPopa} gave a stronger result. They prove that $$\underline{\Omega}_{X}^{p} \rightarrow I\underline{\Omega}^{p}_{X}$$
    is a quasi-isomorphism for $n-k-1 \leq p \leq n$ whenever $X$ satisfies the $(*)_{k}$ (or $HRH(X) \geq k$) condition. 
\end{rmk}

\begin{cor}\label{*k+1}
    If $X$ has pre-$k$-rational singularities and the map $$\cH^{i}(\underline{\Omega}^{k+1}_{X}) \rightarrow \cH^{i}(I\underline{\Omega}^{k+1}_{X})$$
    is an isomorphism for $ k+1<i$ and surjective for $i = k+1,$ then $X$ satisfies the $(*)_{k+1}$ condition.
\end{cor}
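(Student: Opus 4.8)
The plan is to unwind the $(*)_{k+1}$ condition and reduce it to a single quasi-isomorphism. By Remark~\ref{Rmk1}, $X$ satisfies $(*)_{k+1}$ exactly when $\phi_{p}\colon \underline{\Omega}^{p}_{X}\to I\underline{\Omega}^{p}_{X}$ is a quasi-isomorphism for all $0\le p\le k+1$, where $\phi_{p}$ is the map of the exact triangle~\ref{phimap}. Since $X$ has pre-$k$-rational singularities, \cite[\S 9]{PopaShenVo} already gives that $\phi_{p}$ is a quasi-isomorphism for $0\le p\le k$, so $X$ satisfies $(*)_{k}$ and it remains only to show that $\phi_{k+1}$ is a quasi-isomorphism, i.e.\ that $\cH^{i}(\phi_{k+1})\colon \cH^{i}(\underline{\Omega}^{k+1}_{X})\to\cH^{i}(I\underline{\Omega}^{k+1}_{X})$ is an isomorphism for every $i\in\Z$.

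I would then assemble the known information on $\cH^{i}(\phi_{k+1})$ degree by degree. For $i<0$ both complexes are concentrated in nonnegative degrees, so $\cH^{i}(\phi_{k+1})$ is trivially an isomorphism. For $0\le i\le k$, Theorem~\ref{depthThm2} applied with $p=k+1$ (using pre-$k$-rationality) gives that $\cH^{i}(\phi_{k+1})$ is an isomorphism, and the same theorem gives that $\cH^{k+1}(\phi_{k+1})$ is injective. The hypothesis of the corollary supplies the remaining pieces: $\cH^{i}(\phi_{k+1})$ is an isomorphism for $i>k+1$, and $\cH^{k+1}(\phi_{k+1})$ is surjective.

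Combining these, $\cH^{k+1}(\phi_{k+1})$ is both injective and surjective, hence an isomorphism, and together with the other ranges this shows $\cH^{i}(\phi_{k+1})$ is an isomorphism for all $i$. Thus $\phi_{k+1}$ is a quasi-isomorphism, and since $\phi_{p}$ is then a quasi-isomorphism for $0\le p\le k+1$, $X$ satisfies $(*)_{k+1}$.

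The argument is essentially bookkeeping, so the only genuine point to watch is the applicability of Theorem~\ref{depthThm2}, which requires $k\le n-2$. The remaining cases are handled separately and are easy: if $k+1>n$ then $\underline{\Omega}^{k+1}_{X}$ and $I\underline{\Omega}^{k+1}_{X}$ both vanish and there is nothing to prove, and if $k=n-1$ then $\cH^{i}(\underline{\Omega}^{k+1}_{X})=\cH^{i}(I\underline{\Omega}^{k+1}_{X})=0$ for $i>0$ by the vanishing recorded in Remark~\ref{RHMrmk}, while $\cH^{0}(\phi_{k+1})$ is an isomorphism since $X$ has rational singularities, so $\phi_{k+1}$ is a quasi-isomorphism in that case too.
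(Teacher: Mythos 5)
Your argument is correct and is exactly the intended one: Theorem~\ref{depthThm2} (with $p=k+1$) gives that $\cH^{i}(\phi_{k+1})$ is an isomorphism for $i\le k$ and injective for $i=k+1$, the hypothesis supplies surjectivity at $i=k+1$ and isomorphisms for $i>k+1$, and pre-$k$-rationality already gives $\phi_{p}$ a quasi-isomorphism for $p\le k$, so $(*)_{k+1}$ holds. Your separate treatment of the edge cases $k>n-2$ (where Theorem~\ref{depthThm2} does not directly apply) is a careful addition that the paper leaves implicit.
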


\begin{lemma}
    If $k \leq \codim_{X}(S) -2$, then  $\cH^{i}(\bR \underline{\Gamma_{S}}(I\Omp))$ is coherent for $i \leq k+1$
\end{lemma}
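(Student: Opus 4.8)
The plan is to reduce the coherence statement to a standard finiteness theorem for local cohomology, applied to the single coherent sheaf $\cH^{0}(I\Omp)$.

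First I would replace the complex $I\Omp$ by its degree-zero cohomology sheaf. Since $X \setminus S$ is smooth, $I\Omp$ restricts there to the locally free sheaf $\Omega^{p}_{X\setminus S}$ (concentrated in degree $0$), so each $\cH^{j}(I\Omp)$ with $j \geq 1$ is supported on $S$. Applying $\bR\underline{\Gamma_{S}}$ to the truncation triangle $\cH^{0}(I\Omp) \to I\Omp \to \tau_{\geq 1}(I\Omp) \xrightarrow{+1}$ and using that $\bR\underline{\Gamma_{S}}$ fixes $\tau_{\geq 1}(I\Omp)$ (all of whose cohomology sheaves are coherent and supported on $S$), the long exact sequence of cohomology sheaves shows that $\cH^{i}(\bR\underline{\Gamma_{S}}(I\Omp))$ is coherent as soon as $\cH^{i}_{S}(\cH^{0}(I\Omp))$ is. So it suffices to show $\cH^{i}_{S}(\cH^{0}(I\Omp))$ is coherent for $i \leq k+1$.

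Next I would recall that $\cH^{0}(I\Omp) \cong f_{*}\Omega^{p}_{\tX}$ for a resolution $f\colon \tX \to X$ \cite{ks}, so $\cG := \cH^{0}(I\Omp)$ is a coherent sheaf on $X$ which is locally free on $X \setminus S$ (where $f$ is an isomorphism). I would then invoke Grothendieck's finiteness theorem for local cohomology (SGA2, Exp. VIII), equivalently Faltings' local–global principle: for a coherent sheaf $\cG$ on $X$, the sheaf $\cH^{i}_{S}(\cG)$ is coherent for every $i$ strictly less than $\min\{\depth(\cG_{\mathfrak p}) + \codim_{\overline{\{\mathfrak p\}}}(\overline{\{\mathfrak p\}}\cap S) : \mathfrak p \in \supp(\cG),\ \mathfrak p \notin S\}$. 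The key observation is that at a point $\mathfrak p \notin S$ the stalk $\cG_{\mathfrak p}$ is free over the regular local ring $\cO_{X,\mathfrak p}$, so $\depth(\cG_{\mathfrak p}) = \dim\cO_{X,\mathfrak p} = n - \dim\overline{\{\mathfrak p\}}$; consequently the two terms sum to $n - \dim(\overline{\{\mathfrak p\}}\cap S) \geq n - \dim S = \codim_{X}(S)$. Hence $\cH^{i}_{S}(\cH^{0}(I\Omp))$ is coherent for all $i < \codim_{X}(S)$, and since by hypothesis $k \leq \codim_{X}(S) - 2$ this covers the range $i \leq k+1$, completing the argument.

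The only delicate point is invoking the finiteness theorem with the correct numerics — in particular that the relevant invariant is computed using depths at points \emph{outside} $S$, precisely where $\cH^{0}(I\Omp)$ is locally free, which makes the estimate immediate; note that no assumption on the singularities of $X$ beyond normality is used, the hypothesis $k \leq \codim_{X}(S) - 2$ serving only to place $i \leq k+1$ inside the admissible range. If one prefers to avoid citing SGA2, the coherence of $\cH^{i}_{S}(\cH^{0}(I\Omp))$ for $i \leq k+1$ can instead be proved by induction on $\dim X$ using a sufficiently general hyperplane section $L$ (for which $\codim_{L}(S\cap L) = \codim_{X}(S)$), restricting the localization triangle of $\cH^{0}(I\Omp)$ along $L$ and running a Nakayama-type support argument, exactly in the spirit of the proof of Theorem~\ref{depthThm2}.
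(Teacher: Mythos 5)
Your argument is correct and follows essentially the same route as the paper: both proofs use the truncation triangle to reduce to the coherent sheaf $\cH^{0}(I\Omp)\cong f_{*}\Omega^{p}_{\tX}$ (the higher cohomology sheaves being coherent and supported on $S$, hence fixed by $\bR\underline{\Gamma_{S}}$), and then invoke a finiteness theorem for local cohomology along $S$ — the paper cites Siu's analytic version, you cite the SGA2/Grothendieck criterion — which in either form gives coherence of $\cH^{i}_{S}$ in degrees $i\leq \codim_{X}(S)-1\geq k+1$ because $f_{*}\Omega^{p}_{\tX}$ is locally free off $S$.
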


\begin{proof}
    We have $\cH^{0}(I\Omp) \cong f_{*}\Omega^{p}_{\tX}$. Thus, we have an exact triangle
    $$\bT f_{*}\Omega^{p}_{\tX} \ar[r] & I \Omp \ar[r] & \tau^{\geq 1} I\Omp \ar[r, "+1"] & \hfill. \eT$$
    If we apply the functor $\bR \underline{\Gamma_{S}}$, we have the exact triangle
    $$\bT \bR \underline{\Gamma_{S}}(f_{*}\Omega^{p}_{\tX}) \ar[r] & \bR \underline{\Gamma_{S}}(I \Omp) \ar[r] & \bR \underline{\Gamma_{S}}(\tau^{\geq 1} I\Omp) \ar[r, "+1"] & \hfill. \eT$$
    Since $\tau^{\geq 1} I\Omp$ is supported on $S$, the exact triangle above is simply given by 
    $$\bT \bR \underline{\Gamma_{S}}(f_{*}\Omega^{p}_{\tX}) \ar[r] & \bR \underline{\Gamma_{S}}(I \Omp) \ar[r] & \tau^{\geq 1} I\Omp \ar[r, "+1"] & \hfill. \eT$$
    By \cite{Siu}, $\cH^{i}(\bR \underline{\Gamma_{S}}(f_{*}\Omega^{p}_{\tX}))$ is coherent for $i \leq \codim_{X}(S) -1.$ By the exact triangle above, $\cH^{i}(\bR \underline{\Gamma_{S}}(I\Omp))$ must be coherent for $i \leq \codim_{X}(S) -1.$ Now the lemma follows from the assumption $k+1 \leq \codim_{X}(S) -1.$
\end{proof}

\begin{thm}\label{depthThm}
    Assume $k \leq \codim_{X}(S) -2$ and $I\Omp \simeq \cH^{0}(I\Omp)$ for $p \leq k$. Then
    $$\depth_{S}(I\underline{\Omega}^{p}_{X}) \geq k +2 \quad \text{for $0 \leq p \leq n$}.$$
     
\end{thm}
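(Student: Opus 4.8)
The plan is to reduce the statement to a bound on the supports of the cohomology sheaves of the complexes $I\underline{\Omega}^{a}_{X}$, and then to prove that bound by induction on $\dim X$, mirroring Proposition \ref{depthProp} and the end of the proof of Theorem \ref{depthThm2}. First, $\depth_{S}(I\underline{\Omega}^{p}_{X}) \geq k+2$ is equivalent to $\cH^{i}(\bR \underline{\Gamma_{S}}(I\underline{\Omega}^{p}_{X})) = 0$ for all $i \leq k+1$ (the ideal-theoretic depth along $S$ coincides with $\min_{q\in S}\depth((I\underline{\Omega}^{p}_{X})_{q})$), and by the coherence lemma above these sheaves are coherent in that range, so the equivalence may be applied and everything checked stalkwise. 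Fix a (not necessarily closed) point $q \in S$ and put $e := \dim \overline{\{q\}}$; since $\codim_{X}(S) \geq k+2$ we have $e \leq \dim S = n - \codim_{X}(S) \leq n-k-2$. The localization $(\omega^{\bullet}_{X})_{q}$ is a dualizing complex for $\cO_{X,q}$ differing from the normalized one by the shift $[e]$, so Grothendieck local duality together with the quasi-isomorphism $\bR \cH om_{\cO_{X}}(I\underline{\Omega}^{p}_{X}, \omega^{\bullet}_{X}) \simeq I\underline{\Omega}^{n-p}_{X}[n]$ (valid because $IC^{H}_{X}$ is pure of weight $n$) gives, exactly as in Proposition \ref{depthProp} but with $n$ replaced by $n-e$,
$$\cH^{i}\big(\bR \Gamma_{m_{q}}((I\underline{\Omega}^{p}_{X})_{q})\big) \;\cong\; Hom_{\cO_{X,q}}\big(\cH^{\,n-e-i}((I\underline{\Omega}^{n-p}_{X})_{q}),\, E_{q}\big),$$
with $E_{q}$ the injective hull of the residue field of $\cO_{X,q}$. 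Since $e \leq n-k-2$, every exponent $n-e-i$ with $i\leq k+1$ is $\geq 1$.

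Consequently it suffices to prove the support bound
$$(\star)\qquad \dim \supp \cH^{j}(I\underline{\Omega}^{a}_{X}) \;\leq\; n-k-2-j \qquad \text{for all } j \geq 1 \text{ and all } 0 \leq a \leq n.$$
Indeed, if some $q \in S$ with $i \leq k+1$ satisfied $q \in \supp \cH^{\,n-e-i}(I\underline{\Omega}^{n-p}_{X})$, then applying $(\star)$ with $a=n-p$ and $j=n-e-i\geq 1$ gives $e = \dim \overline{\{q\}} \leq \dim \supp \cH^{\,n-e-i}(I\underline{\Omega}^{n-p}_{X}) \leq n-k-2-(n-e-i)$, i.e. $i\geq k+2$, a contradiction. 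I would prove $(\star)$ by induction on $n = \dim X$ (one may assume $S \neq \emptyset$, so $n \geq k+2$). For $j \geq n-k-1$ the claim just asserts $\cH^{j}(I\underline{\Omega}^{a}_{X}) = 0$, which holds for every $a$: for $a \leq k$ it is the hypothesis $I\underline{\Omega}^{a}_{X} \simeq \cH^{0}(I\underline{\Omega}^{a}_{X})$; for $a = k+1$ it follows from the standard vanishing $\cH^{j}(I\underline{\Omega}^{b}_{X}) = 0$ for $j>n-b$ together with $\cH^{\,n-k-1}(I\underline{\Omega}^{k+1}_{X}) = 0$ from Lemma \ref{depthlemma}; for $a \geq k+2$ it is again the standard vanishing. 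This also settles the base case $n=k+2$, where $n-k-1 = 1$.

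For the inductive step ($n \geq k+3$ and $1 \leq j \leq n-k-2$), pass to a sufficiently general hyperplane section $L$: it is normal, $\codim_{L}(S \cap L) = \codim_{X}(S) \geq k+2$, and it still satisfies $I\underline{\Omega}^{a}_{L} \simeq \cH^{0}(I\underline{\Omega}^{a}_{L})$ for $a \leq k$, so the induction hypothesis applies to $L$. The hyperplane-restriction triangle for $I\underline{\Omega}^{\bullet}$ (cf. \cite[Lemma 6.6]{ParkPopa}, used for $\Omega^{p}(\mathcal{K})$ in the proof of Theorem \ref{depthThm2})
$$\bT I\underline{\Omega}^{a-1}_{L} \otimes \cO_{L}(-L) \ar[r] & I\underline{\Omega}^{a}_{X} \otimes \cO_{L} \ar[r] & I\underline{\Omega}^{a}_{L} \ar[r, "+1"] & {} \eT$$
yields, for general $L$, a long exact sequence of cohomology sheaves, hence
$$\supp \cH^{j}(I\underline{\Omega}^{a}_{X}) \cap L \;\subseteq\; \supp \cH^{j}(I\underline{\Omega}^{a-1}_{L}) \;\cup\; \supp \cH^{j}(I\underline{\Omega}^{a}_{L}),$$
and the right-hand side has dimension $\leq (n-1)-k-2-j$ by the induction hypothesis. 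If $\dim \supp \cH^{j}(I\underline{\Omega}^{a}_{X}) \leq 0$ we are done since $j \leq n-k-2$; otherwise, for general $L$ the intersection with $L$ lowers the dimension by exactly one, so $\dim \supp \cH^{j}(I\underline{\Omega}^{a}_{X}) \leq n-k-2-j$, which is $(\star)$.

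The coherence lemma is used to keep the objects under control: it makes $\depth_{S} \geq k+2$ equivalent to the vanishing of $\cH^{i}(\bR \underline{\Gamma_{S}}(I\underline{\Omega}^{p}_{X}))$ in degrees $i \leq k+1$, and together with Nakayama/transversality it is what legitimizes the step "intersection with a general $L$ drops the support dimension by one." I expect the real difficulty to be in the inductive step for $(\star)$: one must know that a general hyperplane section inherits the condition $I\underline{\Omega}^{a}_{L} \simeq \cH^{0}(I\underline{\Omega}^{a}_{L})$ for $a \leq k$ (a Bertini statement for the $(*)_{k}$, equivalently $HRH \geq k$, condition) and that the hyperplane-restriction triangle for $I\underline{\Omega}^{\bullet}$ is available in exactly this form; granting these two inputs, both expected from \cite{ParkPopa}, the codimension count closes the induction.
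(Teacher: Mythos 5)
Your argument is correct, and it reaches the theorem by a genuinely different route than the paper. You convert the depth bound along $S$ into a single support-dimension estimate $(\star)$ on the sheaves $\cH^{j}(I\underline{\Omega}^{a}_{X})$, $j\geq 1$, by applying local duality at arbitrary (not necessarily closed) points $q\in S$ and tracking the shift $[e]$, $e=\dim\overline{\{q\}}$, in the dualizing complex; you then prove $(\star)$ by hyperplane induction, with the boundary cases $j\geq n-k-1$ supplied by the hypothesis, the standard vanishing $\cH^{j}(I\underline{\Omega}^{b}_{X})=0$ for $j>n-b$, and Lemma \ref{depthlemma}. The paper instead only dualizes at closed points (Proposition \ref{depthProp}) and then shows that the local cohomology sheaves $\cH^{i}_{S}(I\underline{\Omega}^{p}_{X})$ have zero-dimensional support for $i\leq k+1$; its induction requires splitting $S\cap H$ into the singular locus $S_{H}$ of $H$ (induction hypothesis) and its smooth complement (codimension estimate for the locally free $\Omega^{p}_{V}$), a Tor-independence diagram, and the coherence lemma in order to apply Nakayama to $\cH^{i}_{S}$. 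Your route bypasses all of that: the sheaves $\cH^{j}(I\underline{\Omega}^{a}_{X})$ are coherent for free, and $(\star)$ is in fact equivalent to the statement of the theorem (both say precisely that no $q\in S$ with $\dim\overline{\{q\}}=e$ lies in $\supp\cH^{n-e-i}(I\underline{\Omega}^{n-p}_{X})$ for $i\leq k+1$), so nothing is lost in the reduction. Of the two inputs you flag, the restriction triangle for $I\underline{\Omega}^{\bullet}$ along a general hyperplane is exactly \cite[Prop.\ 4.18]{ks}, which the paper uses in the same form; and the Bertini statement that $L$ inherits $I\underline{\Omega}^{a}_{L}\simeq\cH^{0}(I\underline{\Omega}^{a}_{L})$ for $a\leq k$ need not be imported, since it follows in one line from that same triangle by induction on $a$: for general $L$ the sequence $\cH^{i}(I\underline{\Omega}^{a}_{X})\otimes\cO_{L}\to\cH^{i}(I\underline{\Omega}^{a}_{L})\to\cH^{i+1}(I\underline{\Omega}^{a-1}_{L})\otimes\cO_{L}(-L)$ is exact and the outer terms vanish for $i\geq 1$. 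With that sentence added, your proof is complete and somewhat leaner than the paper's.
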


\begin{proof}
    We follow the proof of \cite[Thm 4.1]{AH2}.  We need to show $\cH^{i}(\bR \underline{\Gamma_{S}}(I\Omp)) = 0$ for $i \leq k+1.$ For any closed point $x \in S$, we  have a spectral sequence
    $$E^{\alpha,\beta}_{2} = \cH^{\alpha}(\bR \underline{\Gamma_{x}}(\cH^{\beta}(\bR \underline{\Gamma_{S}}(I\underline{\Omega}^{p}_{X})))) \Rightarrow \cH^{\alpha +\beta}(\bR \underline{\Gamma_{x}}\bR \underline{\Gamma_{S}}(I\underline{\Omega}^{p}_{X})) \simeq \cH^{\alpha+\beta}(\bR \underline{\Gamma_{x}}(I\underline{\Omega}^{p}_{X})).$$ 
    If the dimension of the support of $\cH^{i}_{S}(I\underline{\Omega}^{p}_{X}):=\cH^{i}(\bR \underline{\Gamma_{S}}(\underline{\Omega}^{p}_{X}))$ is zero for $i \leq k+1$, then $\cH^{i}_{S}(I\underline{\Omega}^{p}_{X}) = 0$ for $i \leq k+1$ by Proposition \ref{depthProp}. So, we need to show the dimension of the support of $\cH^{i}_{S}(I\underline{\Omega}^{p}_{X})$ is zero for $i \leq k+1.$ 
        
        The proof will be by induction on the dimension of $X$. If $\dim X = 2$, then $X$ has isolated singularities, and the theorem holds from Proposition \ref{depthProp}. If $H$ is a sufficiently general hyperplane section, then $H$ is normal and, by \cite[Prop. 4.18]{ks}, we have exact sequence
        $$\bT0 \arrow[r]& I\underline{\Omega}^{p-1}_{H} \otimes \cO_{H}(-H) \arrow[r]   & I\underline{\Omega}^{p}_{X} \otimes \cO_{H} \arrow[r]   & I\underline{\Omega}^{p}_{H} \arrow[r] & 0, \eT$$
        with the long exact sequence
        $$\cdots \rightarrow \cH^{i}(I\underline{\Omega}^{p-1}_{H}) \otimes \cO_{H}(-H) \rightarrow \cH^{i}(I\underline{\Omega}^{p}_{X}) \otimes \cO_{H} \rightarrow \cH^{i}( I\underline{\Omega}^{p}_{H}) \rightarrow \cH^{i+1}(I\underline{\Omega}^{p-1}_{H}) \otimes  \cO_{H}(-H) \rightarrow  \cdots.$$
    Therefore, we have $\cH^{i}(I\underline{\Omega}^{p}_{H}) = 0$ for $i \geq 1$ and $p \leq k$. Furthermore, we may choose $H$ such that the singular locus of $H$ is contained in $S \cap H$, and the codimension of $(S \cap H)$ in $H$ is greater than or equal to $k+2$. 
    
    We first claim that $\cH^{i}(\bR \underline{\Gamma_{S \cap H}}(I\underline{\Omega}^{p}_{H})) = 0$  for $i \leq k+1.$ Let $S_{H} \subset S \cap H$ denote the singular locus of $H$, and let $V = H \backslash S_{H}$ denote the complement. If $\rho: V \hookrightarrow H$ is the natural map, then there is an exact triangle
    $$\bT  \bR \underline{\Gamma_{S_{H}}}(I\underline{\Omega}^{p}_{H}) \arrow[r] & \bR \underline{\Gamma_{S \cap H}}(I\underline{\Omega}^{p}_{H}) \arrow[r] & \bR \rho_{*}\bR \underline{\Gamma_{S \cap V}}(I\underline{\Omega}^{p}_{V})) \arrow[r, "+1"]  & \hfill. \eT$$
    By the induction hypothesis, $\cH^{i}(\bR \underline{\Gamma_{S_{H}}}(I\underline{\Omega}^{p}_{H})) = 0$ for $i \leq k+1.$ So, it suffices to show $\cH^{i}(\bR \rho_{*}\bR \underline{\Gamma_{S \cap V}}(\underline{I\Omega}^{p}_{V}))) = 0$ for $i \leq k+1.$ Note that $V$ is smooth, and $I\underline{\Omega}^{p}_{V} = \Omega^{p}_{V}$ is locally free. Since the codimension of $S \cap V$ in $V$ is greater than or equal to $k+2$, we have $\cH^{i}_{S \cap V}(\Omega^{p}_{V}) =  0$ for $i \leq k +1$. Therefore $\cH^{i}(\bR \rho_{*}\bR \underline{\Gamma_{S \cap V}}(I\underline{\Omega}^{p}_{V}))) = 0$ for $i \leq k+1.$
    
    Let $U = X \backslash S$ with natural map $j: U \hookrightarrow X$, and $U_{H} = U \cap H = H \backslash (H \cap S)$ with natural map $\varrho: U_{H} \hookrightarrow H.$ For a sufficiently general hyperplane $H$, $U$ and $H$ are Tor-independent over $X$ \cite[Lemma 75.20.4][\href{https://stacks.math.columbia.edu/tag/08IM}{Tag 08IM}]{stacks-project} and we have the commutative diagram with the rows and columns exact,
        $$\bT
        \hfill & \hfill & \hfill\\
   \bR \varrho_{*}I\underline{\Omega}^{p-1}_{U_H} \otimes \cO_{H}(-H) \arrow[r] \arrow[u, "+1"]  & \bR j_{*}I\underline{\Omega}^{p}_{U} \otimes^{L} \cO_{H} \arrow[r] \arrow[u, "+1"] & \bR \varrho_{*}I\underline{\Omega}^{p}_{U_H} \arrow[r, "+1"]  \arrow[u, "+1"] & \hfill \\
   I\underline{\Omega}^{p-1}_{H} \otimes \cO_{H}(-H) \arrow[r] \arrow[u]  & I\underline{\Omega}^{p}_{X} \otimes^{L} \cO_{H} \arrow[r]  \arrow[u] & I\underline{\Omega}^{p}_{H} \arrow[r, "+1"] \arrow[u] & \hfill.\\
    \bR \underline{\Gamma_{S \cap H}}(I\underline{\Omega}^{p-1}_{H}) \otimes \cO_{H}(-H) \arrow[r] \arrow[u]  & \bR \underline{\Gamma_{S }}(I\underline{\Omega}^{p}_{X})\otimes^{L} \cO_{H} \arrow[r]\arrow[u]  & \bR \underline{\Gamma_{S \cap H}}(I\underline{\Omega}^{p}_{H}) \arrow[r, "+1"] \arrow[u]\ & \hfill.
    \eT$$
    We have the long exact sequence of local cohomology
    $$ \cdots \rightarrow \cH^{i}(  \bR  \underline{\Gamma_{S \cap H}}(I\underline{\Omega}^{p-1}_{H})) \otimes \cO_{H}(-H) \rightarrow \cH^{i}(\bR \underline{\Gamma_{S }}(I\underline{\Omega}^{p}_{X})\otimes^{L} \cO_{H}) \rightarrow \cH^{i}(\bR \underline{\Gamma_{S \cap H}}(I\underline{\Omega}^{p}_{H})) \rightarrow \cdots. $$
    By the previous discussion, $\cH^{i}(  \bR  \underline{\Gamma_{S \cap H}}(I\underline{\Omega}^{p-1}_{H})) \otimes \cO_{H}(-H) = \cH^{i}(\bR \underline{\Gamma_{S \cap H}}(I\underline{\Omega}^{p}_{H}))= 0$ for $i \leq k +1$. Thus, 
    $$ \cH^{i}(\bR \underline{\Gamma_{S }}(I\underline{\Omega}^{p}_{X})\otimes^{L} \cO_{H})  = 0 \quad \text{for $i \leq k+1.$}$$
    The natural map
    $$ \cH^{i}(\bR \underline{\Gamma_{S}}(I\underline{\Omega}^{p}_{X})) \otimes \cO_{X}(-H) \rightarrow  \cH^{i}(\bR \underline{\Gamma_{S}}(I\underline{\Omega}^{p}_{X}))$$
    must be surjective for $i \leq k+1$. Let $A_{i} = \supp{\cH^{i}(\bR \underline{\Gamma_{S }}(I\underline{\Omega}^{p}_{X})})$ and suppose that $\dim A_{i} \geq 1$ for $i \leq k+1.$ For each $i \leq k +1$, we may choose $H$ such that $A_{i} \cap H \neq \emptyset$. If we localize at any closed point $x \in A_{i} \cap H$, there is a surjective map
    $$ \cH^{i}(\bR \underline{\Gamma_{S}}(I\underline{\Omega}^{p}_{X}))_x \otimes \cO_{X}(-H)_x \rightarrow  \cH^{i}(\bR \underline{\Gamma_{S}}(I\underline{\Omega}^{p}_{X}))_x.$$
    Since $\cH^{i}(\bR \underline{\Gamma_{S}}(I\underline{\Omega}^{p}_{X}))_{x}$ is a coherent for $i \leq k+1$ by the previous lemma, we can conclude that $\cH^{i}(\bR \underline{\Gamma_{S}}(I\underline{\Omega}^{p}_{X}))_x = 0$ for $i \leq k+1$ by applying Nakayama's lemma. Contradicting that $x \in A_{i}$, hence $\dim A_{i} =0$ for $i \leq k +1$.
\end{proof}

\begin{thm}\label{depththm3}
   If  $X$ has pre-$k$-rational singularities and $k \leq \codim_{X}(S)-2$, then
    $$\depth_{S}(\underline{\Omega}^{p}_{X}) \geq k+2  \quad \text{for $0 \leq p \leq  \dim(X)$}.$$ 
\end{thm}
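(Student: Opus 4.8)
The plan is to deduce the statement from Theorems \ref{depthThm} and \ref{depthThm2} by applying $\bR\underline{\Gamma_{S}}$ to the triangle \ref{phimap} and chasing the resulting long exact sequence of local cohomology sheaves. First I would check that Theorem \ref{depthThm} is applicable. We may assume $S\neq\emptyset$ (otherwise $X$ is smooth and there is nothing to prove), so $2\le\codim_{X}(S)\le n$, and in particular $k\le\codim_{X}(S)-2\le n-2$; this last inequality also makes Theorem \ref{depthThm2} available. Since $X$ is normal with pre-$k$-rational singularities it has pre-$k$-Du Bois singularities by the implications recalled in the introduction, so $\cH^{i}(\Omp)=0$ for $i>0$ and $p\le k$, i.e.\ $\Omp\simeq\cH^{0}(\Omp)$ for $p\le k$; combined with the quasi-isomorphism $\phi_{p}\colon\Omp\to I\Omp$ for $p\le k$ from \cite{PopaShenVo}, this gives $I\Omp\simeq\cH^{0}(I\Omp)$ for $p\le k$. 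Theorem \ref{depthThm} then yields
$$\cH^{i}(\bR\underline{\Gamma_{S}}(I\Omp))=0\qquad\text{for $i\le k+1$ and $0\le p\le n$.}$$

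Next I would record the vanishing $\cH^{i}(\Omega^{p}(\mathcal{K}))=0$ for $i\le k$ and $0\le p\le n$. This is read off the long exact cohomology sequence of \ref{phimap} together with Theorem \ref{depthThm2}: for $i\le k$ the map $\cH^{i}(\Omp)\to\cH^{i}(I\Omp)$ is an isomorphism, hence surjective, and $\cH^{i+1}(\Omp)\to\cH^{i+1}(I\Omp)$ is injective, which forces $\cH^{i}(\Omega^{p}(\mathcal{K}))=0$. Since $\mathcal{K}=Cone(\Q^{H}_{X}[n]\to IC^{H}_{X})$ is supported on $S$, so is $\Omega^{p}(\mathcal{K})=Gr^{F}_{-p}DR(\mathcal{K})[p-n]$, and therefore $\cH^{i}(\bR\underline{\Gamma_{S}}(\Omega^{p}(\mathcal{K})))=\cH^{i}(\Omega^{p}(\mathcal{K}))=0$ for $i\le k$.

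Finally I would apply $\bR\underline{\Gamma_{S}}$ to \ref{phimap} and extract the long exact sequence
$$\cdots\to\cH^{i-1}(\bR\underline{\Gamma_{S}}(\Omega^{p}(\mathcal{K})))\to\cH^{i}(\bR\underline{\Gamma_{S}}(\Omp))\to\cH^{i}(\bR\underline{\Gamma_{S}}(I\Omp))\to\cH^{i}(\bR\underline{\Gamma_{S}}(\Omega^{p}(\mathcal{K})))\to\cdots.$$
For $i\le k+1$ the term $\cH^{i-1}(\bR\underline{\Gamma_{S}}(\Omega^{p}(\mathcal{K})))$ vanishes (as $i-1\le k$) and the term $\cH^{i}(\bR\underline{\Gamma_{S}}(I\Omp))$ vanishes (by Theorem \ref{depthThm}), so $\cH^{i}(\bR\underline{\Gamma_{S}}(\Omp))=0$ for $i\le k+1$ and all $0\le p\le n$, which is exactly the assertion $\depth_{S}(\Omp)\ge k+2$.

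The substantive work is entirely contained in Theorems \ref{depthThm} and \ref{depthThm2}; the only genuine points here are that pre-$k$-rationality supplies the input hypothesis $I\Omp\simeq\cH^{0}(I\Omp)$ for $p\le k$ required to invoke Theorem \ref{depthThm}, and that the two vanishing ranges ($i\le k$ for $\Omega^{p}(\mathcal{K})$ and $i\le k+1$ for $I\Omp$) dovetail correctly in the long exact sequence. I do not anticipate any obstacle beyond this index bookkeeping, apart from the harmless check that $k\le\codim_{X}(S)-2$ forces $k\le n-2$ so that Theorem \ref{depthThm2} can be used.
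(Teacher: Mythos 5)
Your proof is correct, but it follows a genuinely different route from the paper's. The paper reduces $\depth_{S}(\Omp)\geq k+2$ to showing that $\supp \cH^{i}_{S}(\Omp)$ is zero-dimensional for $i\leq k+1$ (via the spectral sequence $\cH^{\alpha}(\bR\underline{\Gamma_{x}}(\cH^{\beta}(\bR\underline{\Gamma_{S}}(\Omp))))\Rightarrow \cH^{\alpha+\beta}(\bR\underline{\Gamma_{x}}(\Omp))$ together with $\depth(\Omp)\geq k+2$ from Theorem \ref{depthThm2}), and then establishes that zero-dimensionality by re-running the hyperplane-section induction of Theorem \ref{depthThm} for $\Omp$ in place of $I\Omp$, invoking \cite[\S V.2.2.1]{GNPP} for the restriction of the Du Bois complex to a general hyperplane. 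You instead bypass the second induction entirely: applying $\bR\underline{\Gamma_{S}}$ to the triangle \ref{phimap} and using that $\Omega^{p}(\mathcal{K})$ is supported on $S$, you get $\cH^{i}_{S}(\Omp)$ squeezed between $\cH^{i-1}(\Omega^{p}(\mathcal{K}))$ (which vanishes for $i-1\leq k$ by the isomorphism/injectivity statement of Theorem \ref{depthThm2}) and $\cH^{i}_{S}(I\Omp)$ (which vanishes for $i\leq k+1$ by Theorem \ref{depthThm}, applicable since pre-$k$-rationality gives $I\Omp\simeq\cH^{0}(I\Omp)$ for $p\leq k$). Your index bookkeeping dovetails correctly, and the preliminary checks ($S\neq\emptyset$ forcing $k\leq n-2$, the identification of $\depth_{S}\geq k+2$ with the vanishing of $\cH^{i}_{S}$ for $i\leq k+1$) match the conventions the paper itself uses in the proofs of Theorems \ref{depthThm2} and \ref{depthThm}. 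Your argument is shorter and purely formal given the two preceding theorems, at the cost of leaning on the full strength of the second assertion of Theorem \ref{depthThm2}; the paper's version is more self-contained at the level of geometric input but repeats the hyperplane induction.
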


\begin{proof}
    To show $\depth_{S}(\Omp) \geq k+2$ for $k \leq \codim_{X}(S)-2$, it suffices to have $\depth(\underline{\Omega}^{p}_{X}) \geq k+2.$ Indeed, for any closed point $x \in S$, we  have a spectral sequence
    $$E^{\alpha,\beta}_{2} = \cH^{\alpha}(\bR \underline{\Gamma_{x}}(\cH^{\beta}(\bR \underline{\Gamma_{S}}(\underline{\Omega}^{p}_{X})))) \Rightarrow \cH^{\alpha +\beta}(\bR \underline{\Gamma_{x}}\bR \underline{\Gamma_{S}}(\underline{\Omega}^{p}_{X})) \simeq \cH^{\alpha+\beta}(\bR \underline{\Gamma_{x}}(\underline{\Omega}^{p}_{X})).$$
    If $\depth(\underline{\Omega}^{p}_{X}) \geq k+2,$ then to prove  $\depth_{S}(\underline{\Omega}^{p}_{X}) \geq k+2$, it suffices to show dimension of the support of $\cH^{i}_{S}(\underline{\Omega}^{p}_{X}):=\cH^{i}(\bR \underline{\Gamma_{S}}(\underline{\Omega}^{p}_{X}))$ is zero for $i \leq k+1$.  We may apply the same proof as Theorem \ref{depthThm} to prove $\depth_{S}(\underline{\Omega}^{p}_{X}) \geq k+2$ because we may consider a sufficiently general hyperplane section $H$ by \cite[\S V.2.2.1]{GNPP} and apply induction. So, the theorem follows from Theorem \ref{depthThm2}.
\end{proof}

\begin{cor}
    Assume $k \leq codim_{X}(S)-2$. Let $U = X \backslash S$ with inculsion map $j: U \hookrightarrow X.$ If $X$ has pre-$k$-rational singularities, then for $0 \leq p \leq n,$ the natural maps
    $$\cH^{i}(\underline{\Omega}_{X}^{p}) \rightarrow \cH^{i}(I\underline{\Omega}^{p}_{X}) \rightarrow R^{i}j_{*}\Omega^{p}_{U}$$
    are an isomorphisms for $i \leq k$ and injective for $i = k+1.$
\end{cor}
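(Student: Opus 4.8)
The plan is to factor each of the two natural maps in the statement and reduce to theorems already established. For the first map $\cH^{i}(\Omp) \to \cH^{i}(I\Omp)$, note that $k \le \codim_{X}(S) - 2 \le n-2$, so the hypotheses of Theorem \ref{depthThm2} hold, and that theorem gives at once the claimed isomorphism for $i \le k$ and injectivity for $i = k+1$.

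For the second map, observe that $U$ is smooth, so $j^{*}I\Omp = I\underline{\Omega}^{p}_{U} = \Omega^{p}_{U}$, and the map $\cH^{i}(I\Omp) \to R^{i}j_{*}\Omega^{p}_{U}$ appearing in the statement is the one induced on cohomology sheaves by the adjunction morphism $I\Omp \to \bR j_{*}j^{*}I\Omp = \bR j_{*}\Omega^{p}_{U}$, whose cone is $\bR \underline{\Gamma_{S}}(I\Omp)[1]$. This yields a long exact sequence
$$\cdots \to \cH^{i}(\bR \underline{\Gamma_{S}}(I\Omp)) \to \cH^{i}(I\Omp) \to R^{i}j_{*}\Omega^{p}_{U} \to \cH^{i+1}(\bR \underline{\Gamma_{S}}(I\Omp)) \to \cdots.$$
I would then invoke Theorem \ref{depthThm}. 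Since $X$ has pre-$k$-rational singularities it has pre-$k$-Du Bois singularities, so $\Omp \simeq \cH^{0}(\Omp)$ for $p \le k$; together with the quasi-isomorphism $\phi_{p}\colon \Omp \to I\Omp$ for $p \le k$ (from \cite{PopaShenVo}), this gives $I\Omp \simeq \cH^{0}(I\Omp)$ for $p \le k$. With $k \le \codim_{X}(S) - 2$, Theorem \ref{depthThm} yields $\depth_{S}(I\Omp) \ge k+2$, i.e. $\cH^{i}(\bR \underline{\Gamma_{S}}(I\Omp)) = 0$ for $i \le k+1$. Plugging this vanishing into the long exact sequence: for $i \le k$ the two neighbouring local-cohomology terms both vanish and $\cH^{i}(I\Omp) \to R^{i}j_{*}\Omega^{p}_{U}$ is an isomorphism, while for $i = k+1$ the term $\cH^{k+1}(\bR \underline{\Gamma_{S}}(I\Omp))$ vanishes, forcing $\cH^{k+1}(I\Omp) \to R^{k+1}j_{*}\Omega^{p}_{U}$ to be injective.

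It then remains to observe that composing the two maps behaves correctly: a composite of isomorphisms is an isomorphism (for $i \le k$), a composite of injections is an injection (for $i = k+1$), and since $j^{*}\Omp = \Omega^{p}_{U}$ as well, the composite $\cH^{i}(\Omp) \to R^{i}j_{*}\Omega^{p}_{U}$ agrees with the natural restriction map, so the chain in the statement behaves as claimed. I do not expect a genuine obstacle here beyond Theorems \ref{depthThm2} and \ref{depthThm} themselves; the only points needing care are checking that pre-$k$-rational singularities supply the hypothesis $I\Omp \simeq \cH^{0}(I\Omp)$ required to apply Theorem \ref{depthThm}, and the compatibility of the adjunction and restriction morphisms so that the two-step factorization is exactly the composite asserted in the corollary.
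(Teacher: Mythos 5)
Your proposal is correct and follows essentially the same route as the paper: Theorem \ref{depthThm2} for the first map, the vanishing $\cH^{i}(\bR \underline{\Gamma_{S}}(I\Omp)) = 0$ for $i \leq k+1$ from Theorem \ref{depthThm} (whose hypothesis $I\Omp \simeq \cH^{0}(I\Omp)$ for $p \leq k$ you justify correctly), and the local cohomology exact triangle for the second map. The only cosmetic difference is that the paper additionally invokes Theorem \ref{depththm3} and the analogous triangle for $\Omp$ itself, whereas you obtain the behavior of the composite from the two individual maps, which is equally valid.
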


\begin{proof}
    By Theorem \ref{depthThm2}, we have the natural map
    $$\cH^{i}(\underline{\Omega}_{X}^{p}) \rightarrow \cH^{i}(I\underline{\Omega}^{p}_{X})$$
    to be an isomorphism for $i \leq k$ and injective for $i = k+1$ for $0 \leq p \leq n.$ By Theorem
    \ref{depthThm} and Theorem \ref{depththm3}, $\cH^{i}_{S}(I\Omp) = \cH^{i}_{S}(\Omp) = 0$ for $i \leq k+1$. Now use the exact triangles
    $$\bT \bR \underline{\Gamma_{S}}(I\Omp) \arrow[r] & I\Omp \arrow[r] & \bR j_{*}\Omega^{p}_{U} \arrow[r,"+1"] & \hfill \eT$$
     $$\bT \bR \underline{\Gamma_{S}}(\Omp) \arrow[r] & \Omp \arrow[r] & \bR j_{*}\Omega^{p}_{U} \arrow[r,"+1"] & \hfill. \eT$$
    
\end{proof}

\begin{cor}
     If $X$ has pre-$k$-rational singularities, then 
    $$\cH^{i}_{S}(\cH^{0}(\Omp)) = 0 \quad \text{for $0 \leq i \leq \min \{\codim_{X}(S)-2,k\}+1,$ for $0 \leq p \leq k,$}$$
    
    $$\cH^{i}_{S}(\Omega^{p}_{X}) = 0 \quad \text{for $2 \leq i \leq \min \{\codim_{X}(S)-2,k\}+1,$ for $0 \leq p \leq k.$}$$
\end{cor}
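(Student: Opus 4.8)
The plan is to deduce both vanishings from Theorem~\ref{depththm3} together with two elementary identifications. For the first statement, recall that pre-$k$-rational singularities are in particular pre-$k$-Du Bois, so by the chain of quasi-isomorphisms $\cH^{0}(\Omp)\to\Omp\to I\Omp\to\D(\underline{\Omega}^{n-p}_{X})$ recorded in the proof of Theorem~\ref{depthThm2}, the canonical map $\cH^{0}(\Omp)\to\Omp$ is a quasi-isomorphism for $0\leq p\leq k$. Hence $\bR\underline{\Gamma_{S}}(\cH^{0}(\Omp))\simeq\bR\underline{\Gamma_{S}}(\Omp)$, and it suffices to bound $\cH^{i}_{S}(\Omp)$. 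Writing $c=\codim_{X}(S)$ (so $c\geq 2$ by normality), if $k\leq c-2$ then Theorem~\ref{depththm3} gives $\cH^{i}_{S}(\Omp)=0$ for $i\leq k+1=\min\{c-2,k\}+1$; if instead $k\geq c-1$, then $X$ also has pre-$(c-2)$-rational singularities and $c-2\geq 0$, so Theorem~\ref{depththm3} applied with the parameter $c-2$ gives $\cH^{i}_{S}(\underline{\Omega}^{p}_{X})=0$ for $i\leq c-1=\min\{c-2,k\}+1$ and all $0\leq p\leq n$. In either case $\cH^{i}_{S}(\cH^{0}(\Omp))=\cH^{i}_{S}(\Omp)=0$ for $0\leq i\leq\min\{c-2,k\}+1$ and $0\leq p\leq k$, which is the first vanishing.

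For the second statement I would compare the K\"ahler differentials $\Omega^{p}_{X}$ with $\cH^{0}(\Omp)$ through the canonical morphism $\Omega^{p}_{X}\to\cH^{0}(\Omp)$ obtained by applying $\cH^{0}$ to the natural map $\Omega^{p}_{X}\to\Omp$. Over the smooth locus $U=X\backslash S$ the Du Bois complex agrees with the usual de Rham complex, so this morphism is an isomorphism on $U$; consequently its kernel $\mathcal{T}$ and cokernel $\mathcal{C}$ are coherent sheaves supported on $S$, and for any sheaf supported on $S$ the functor $\bR\underline{\Gamma_{S}}$ acts as the identity, so $\cH^{i}_{S}(\mathcal{T})=\cH^{i}_{S}(\mathcal{C})=0$ for $i\neq 0$. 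Splitting $\Omega^{p}_{X}\to\cH^{0}(\Omp)$ into the short exact sequences $0\to\mathcal{T}\to\Omega^{p}_{X}\to\mathcal{Q}\to 0$ and $0\to\mathcal{Q}\to\cH^{0}(\Omp)\to\mathcal{C}\to 0$, where $\mathcal{Q}$ is the image, and chasing the long exact sequences of $\bR\underline{\Gamma_{S}}$, the first yields $\cH^{i}_{S}(\Omega^{p}_{X})\cong\cH^{i}_{S}(\mathcal{Q})$ for $i\geq 1$ and the second yields $\cH^{i}_{S}(\mathcal{Q})\cong\cH^{i}_{S}(\cH^{0}(\Omp))$ for $i\geq 2$. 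Composing these isomorphisms gives $\cH^{i}_{S}(\Omega^{p}_{X})\cong\cH^{i}_{S}(\cH^{0}(\Omp))$ for $i\geq 2$, and plugging in the first statement forces $\cH^{i}_{S}(\Omega^{p}_{X})=0$ for $2\leq i\leq\min\{c-2,k\}+1$ and $0\leq p\leq k$.

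There is no real obstacle: the argument is formal once Theorem~\ref{depththm3} is available. The only points requiring a moment's care are the bookkeeping with the minimum — namely, recognizing that when $k$ is large compared to the codimension one must feed Theorem~\ref{depththm3} the truncated parameter $\min\{k,c-2\}$ rather than $k$ itself — and checking that $\Omega^{p}_{X}\to\cH^{0}(\Omp)$ really is an isomorphism over the smooth locus, which is what makes the kernel and cokernel $S$-supported and hence renders the degree shift by two in the last step harmless.
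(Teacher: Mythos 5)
Your argument is correct. It differs from the paper's proof mainly in packaging: the paper deduces both vanishings in two lines from the immediately preceding corollary, which gives $\cH^{0}(\Omp)\cong j_{*}\Omega^{p}_{U}$ and $0=\cH^{i}(\Omp)\cong R^{i}j_{*}\Omega^{p}_{U}\cong\cH^{i+1}_{S}(\Omega^{p}_{X})\cong\cH^{i+1}_{S}(\cH^{0}(\Omp))$ for $1\leq i\leq\min\{\codim_{X}(S)-2,k\}$, so the vanishing there comes from the pre-$k$-Du Bois property $\cH^{i}(\Omp)=0$ together with the triangle $\bR\underline{\Gamma_{S}}(\mathcal F)\to\mathcal F\to\bR j_{*}(\mathcal F|_{U})$. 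You bypass that corollary: for the first vanishing you go straight to Theorem \ref{depththm3} (via $\cH^{0}(\Omp)\simeq\Omp$), and for the second you transport the result to $\Omega^{p}_{X}$ by a kernel/cokernel d\'evissage along $\Omega^{p}_{X}\to\cH^{0}(\Omp)$, which is an isomorphism off $S$. These are equivalent mechanisms --- your d\'evissage is precisely the observation, implicit in the paper's use of $R^{i}j_{*}$, that $\cH^{i}_{S}(\mathcal F)$ for $i\geq 2$ depends only on $\mathcal F|_{U}$ --- and both ultimately rest on Theorem \ref{depththm3}. One point in your write-up's favor: you make explicit the truncation $k\mapsto\min\{k,\codim_{X}(S)-2\}$ needed to invoke the depth theorem when $k$ exceeds $\codim_{X}(S)-2$ (using that pre-$k$-rational implies pre-$j$-rational for $j\leq k$ and that $\codim_{X}(S)\geq 2$ by normality), a step the paper leaves implicit since its preceding corollary is stated only under $k\leq\codim_{X}(S)-2$.
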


\begin{proof}
     Let $U = X \backslash S$ with inclusion map $j: U \hookrightarrow X.$ By the previous corollary, if $X$ has pre-$k$-rational singularities, then for $0 \leq p \leq k,$ we have
    $$\cH^{0}(\Omp) \cong j_{*}\Omega^{p}_{U},$$

    $$ 0=\cH^{i}(\underline{\Omega}_{X}^{p}) \cong R^{i}j_{*}\Omega^{p}_{U} \cong \cH^{i+1}_{S}(\Omega^{p}_{X}) \cong \cH^{i+1}_{S}(\cH^{0}(\Omp)) \quad \text{for $1\leq i \leq \min \{\codim_{X}(S)-2,k\}.$}$$
\end{proof}

\section{On the local cohomological dimension}

Again, we will keep the same assumptions as the first section. But, consider a smooth variety $Y$ of dimension $m$ and assume $X \subseteq Y$ is a closed subvariety with inclusion map $i: X \hookrightarrow Y$. We define the local cohomological dimension of $X$ in $Y$ to be
$$\lcd(Y,X) = \max\{q| \hspace{.05in}\cH^{q}_{X}(\cO_{Y}) \neq 0\}.$$
For each $q \in \Z$, it is well known $\cH^{q}_{X}(\cO_{Y})$ carries a left $\cD_{Y}$-module structure. In fact, it is the underlying $\cD_{Y}$-module for the mixed Hodge module $\cH^{q-d}\bigg(i_{+}\D(\Q^{H}_{X}[n]) \bigg)$, where $\D(\Q^{H}_{X}[n])$ is the dual of $\Q^{H}_{X}[n]$ in the derived category of mixed Hodge modules, and $d = m-n$. It was shown by Musta\c{t}\u{a} and Popa \cite{MuPo} that the local cohomological dimension of $X$ in $Y$ can be described by using the Hodge filtration of $\cH^{q}\bigg(i_{+}\D(\Q^{H}_{X}[n]) \bigg)$ for $q \in \Z.$ The key observations are
$$\cH^{q}_{X}(\cO_{Y}) = 0 \quad \text{if and only if} \quad \cH^{q-d}\bigg(i_{+}\D(\Q^{H}_{X}[n]) \bigg)= 0.$$
$$\cH^{q-d}\bigg(i_{+}\D(\Q^{H}_{X}[n]) \bigg)= 0 \quad \text{if and only if} \quad \cH^{0}\bigg(Gr^{F}_{p}DR\bigg(\cH^{q-d}\bigg(i_{+}\D(\Q^{H}_{X}[n]) \bigg)\bigg) \bigg)= 0 \quad \text{$p \geq 0$.}$$
There is a spectral sequence
$$E^{r,q-d}_{2}=\cH^{r}\bigg( Gr^{F}_{p}DR\bigg(\cH^{q-d}\bigg(i_{+}\D(\Q^{H}_{X}[n]) \bigg)\bigg) \bigg) \Rightarrow \cH^{r+q-d}\bigg(Gr^{F}_{p}DR(i_{+}\D(\Q^{H}_{X}[n])) \bigg),$$
and a quasi-isomorphism
$$\cH^{r+q-d}\bigg(Gr^{F}_{p}DR(i_{+}\D(\Q^{H}_{X}[n])) \bigg) \simeq \cH^{p+r+q}(\bR \cH om_{\cO_{Y}}(i_{*}\underline{\Omega}^{p}_{X}, \omega_{Y})).$$
Using these facts, Musta\c{t}\u{a} and Popa prove the following theorem to receive an upper bound for $\lcd(Y, X)$.
\begin{thm}\cite{MuPo}\label{MPthm}
   For every positive integer $c$, the following are equivalent:
   \begin{enumerate}
       \item $\lcd(Y,X) \leq c$
       \item $\cE xt^{\ell+q+1}_{\cO_{Y}}(i_{*}\underline{\Omega}^{q}_{X}, \omega_{Y}) = 0$ for all $\ell\geq c$ and $q \geq 0.$
   \end{enumerate}
\end{thm}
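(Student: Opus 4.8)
The plan is to chain together the four facts recalled just before the statement. Write $N_{q}:=\cH^{q-d}\bigl(i_{+}\D(\Q^{H}_{X}[n])\bigr)$ for the cohomology Hodge modules (with $d=m-n$); by the first key observation, (1) is equivalent to the assertion that $N_{q}=0$ for all $q>c$. I will also use the elementary fact that, for a Hodge module $N$ on $Y$, the complex $Gr^{F}_{p}DR(N)$ consists of coherent sheaves placed in cohomological degrees $\leq 0$, so that $\cH^{r}(Gr^{F}_{p}DR(N))=0$ whenever $r>0$.

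For $(1)\Rightarrow(2)$: rewriting the groups in (2) through the duality isomorphism $\cH^{r+q-d}(Gr^{F}_{p}DR(i_{+}\D(\Q^{H}_{X}[n])))\simeq\cE xt^{p+r+q}_{\cO_{Y}}(i_{*}\underline{\Omega}^{p}_{X},\omega_{Y})$ (matching the exterior-power index in (2) with $p$), condition (2) becomes the vanishing $\cH^{r+q-d}(Gr^{F}_{p}DR(i_{+}\D(\Q^{H}_{X}[n])))=0$ for all $p\geq 0$ and all integers $r,q$ with $r+q\geq c+1$. By the spectral sequence, each graded piece of such a group is a subquotient of some $E^{r,q-d}_{2}=\cH^{r}(Gr^{F}_{p}DR(N_{q}))$ with $r+q\geq c+1$; since this term vanishes unless $r\leq 0$, we then have $q\geq r+q\geq c+1>c$, so $N_{q}=0$ by (1) and the term is zero. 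Hence (2) holds.

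For $(2)\Rightarrow(1)$: suppose (1) fails and let $q_{0}>c$ be maximal with $N_{q_{0}}\neq 0$. By (the contrapositive of) the second key observation there is some $p\geq 0$ with $\cH^{0}(Gr^{F}_{p}DR(N_{q_{0}}))\neq 0$, i.e. $E^{0,\,q_{0}-d}_{2}\neq 0$. Any differential leaving this term lands in a bidegree of total degree $q_{0}-d+1$, and every $E_{2}$-term there vanishes: it has the form $\cH^{r}(Gr^{F}_{p}DR(N_{q}))$ with $r+q=q_{0}+1$, forcing $r>0$ or $q>q_{0}$. A similar check, using $\cH^{>0}(Gr^{F}_{p}DR(-))=0$ and the maximality of $q_{0}$, shows that no nonzero differential enters $E^{0,q_{0}-d}$ either. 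Therefore $E^{0,q_{0}-d}_{2}=E^{0,q_{0}-d}_{\infty}$ is a nonzero subquotient of
$$\cH^{q_{0}-d}\bigl(Gr^{F}_{p}DR(i_{+}\D(\Q^{H}_{X}[n]))\bigr)\simeq\cE xt^{p+q_{0}}_{\cO_{Y}}(i_{*}\underline{\Omega}^{p}_{X},\omega_{Y}),$$
so this $\cE xt$-group is nonzero; writing $p+q_{0}=\ell+p+1$ with $\ell=q_{0}-1\geq c$ contradicts (2).

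I expect $(2)\Rightarrow(1)$ to be the only real point: one has to manufacture a nonzero class in the hypercohomology of $Gr^{F}_{p}DR(i_{+}\D(\Q^{H}_{X}[n]))$ out of the nonvanishing of a single cohomology Hodge module, and the natural device is the edge-term survival argument above, which requires tracking exactly which bidegrees the spectral-sequence differentials connect and combining the concentration of $Gr^{F}_{p}DR(N)$ in non-positive degrees with the precise form of the second key observation. The remaining steps---the translation through the duality isomorphism and the reindexing of $\cE xt$-degrees---are purely formal.
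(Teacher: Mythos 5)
Your argument is correct: it assembles exactly the four facts the paper lists before the statement (the two vanishing equivalences, the hypercohomology spectral sequence, and the duality quasi-isomorphism), using the concentration of $Gr^{F}_{p}DR$ of a single Hodge module in degrees $\leq 0$ to make both directions degenerate appropriately, and the edge-term survival argument in $(2)\Rightarrow(1)$ is the standard device. The paper itself gives no proof (it cites \cite{MuPo} and only records these ingredients), and your reconstruction is the intended one; the only points left implicit are minor (boundedness of $\{q: \cH^{q}_{X}(\cO_{Y})\neq 0\}$ so that a maximal $q_{0}$ exists, and the degree range $[-m,0]$ of $Gr^{F}_{p}DR$).
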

Assume $X$ has pre-$k$-rational singularities for $k \leq n-2$. The maps
$$\cH^{0}(\Omp) \rightarrow \Omp \rightarrow I \Omp \rightarrow \D(\underline{\Omega}^{n-p}_{X})[-n]$$
are a quasi-isomorphisms for $p \leq k$. If we dualize, we obtain quasi-isomorphisms
$$\underline{\Omega}^{n-p}_{X} \simeq I\underline{\Omega}^{n-p}_{X} \simeq \D(\underline{\Omega}^{p}_{X})[-n] \quad \text{for $0 \leq p \leq k.$}$$
Thus,
$$\depth(\Omp) \geq n-p \quad \text{for $0 \leq p \leq k$.}$$
$$\depth(\Omp) =n \quad \text{for $n-k \leq p$.}$$
Also, by Theorem \ref{depthThm2}, we have
$$\depth(\underline{\Omega}^{p}_{X}) \geq k +2 \quad \text{for $0 \leq p.$}$$
By Remark \ref{RHMrmk}, it suffices to consider when $n \geq 2k + 3$. So, the optimal bounds for the depth of the graded pieces of the Du Bois complex are given by
$$\depth(\Omp) \geq n-p \quad \text{for $0 \leq p \leq k$}$$
$$\depth(\underline{\Omega}^{p}_{X}) \geq k +2 \quad \text{for $k+1 \leq p \leq n-k -1$}$$
$$\depth(\underline{\Omega}^{p}_{X}) =n \quad \text{for $ n-k \leq p.$}$$
Therefore, we have $2k +3 \leq   \min\{\depth(\underline{\Omega}^{q}_{X}) +q \}$ and we have the equation
$$\cE xt^{m-i+1}_{\cO_{Y}}(i_{*}\underline{\Omega}^{q}_{X}, \omega_{Y}) = 0 \quad \text{for} \quad i +q \leq \max
\begin{cases}
    2k+3 \\ \\
    \min\{\depth(\underline{\Omega}^{q}_{X}) +q \} 
\end{cases}   
0 \leq q.$$
If we set $\ell = m -i -q,$ then 
$$\cE xt^{\ell +q +1}_{\cO_{Y}}(i_{*}\underline{\Omega}^{q}_{X}, \omega_{Y}) = 0 \quad \text{for} \quad \ell \geq m- \max\begin{cases}
    2k+3 \\ \\
    \min\{\depth(\underline{\Omega}^{q}_{X}) +q \} 
    \end{cases}
    \quad \text{$0 \leq q$.}$$
Thus, if $X$ has pre-$k$-rational singularities, then we always have 
$$\lcd(Y,X) \leq \max\{m-2k-3, \codim_{Y}(X)\}.$$
The local cohomological defect $\lcdef(X)$ of $X$ is defined as
$$\lcdef(X) := \lcd(Y,X) - \codim_{X}(Y).$$
The local cohomological defect of $X$ has been of recent interest in Hodge theory. See \cite{ParkPopa},\cite{shenVenVo}, and \cite{PopaShen}. With the upper bound $\lcd(Y, X) \leq m-2k-3$ for varieties with pre-$k$-rational singularities, we obtain an upper bound for the local cohomological defect of $X$. 

\begin{cor} \cite{ParkPopa}
    If $X$ has pre-$k$-rational singularities, then
    $$\lcdef(X) \leq \max \{ n-2k-3, 0 \}.$$
\end{cor}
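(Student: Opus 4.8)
The plan is to read the statement off the local cohomological dimension bound obtained in the paragraph immediately preceding the corollary, namely that pre-$k$-rational singularities force
$$\lcd(Y,X) \leq \max\{m - 2k - 3,\ \codim_{Y}(X)\}$$
for every closed embedding $X \hookrightarrow Y$ into a smooth variety $Y$ of dimension $m$. The first step is to recall how that bound was produced: from the quasi-isomorphisms $\Omp \simeq \D(\underline{\Omega}^{n-p}_{X})[-n]$, valid for $0 \leq p \leq k$ under the pre-$k$-rational hypothesis, together with Theorem~\ref{depthThm2}, one obtains the depth estimates $\depth(\Omp) \geq n-p$ for $0 \leq p \leq k$ and $\depth(\Omp) \geq k+2$ for $k+1 \leq p$; substituting these into the Musta\c{t}\u{a}--Popa criterion (Theorem~\ref{MPthm}) with the index $\ell = m - i - q$ yields the vanishing of the relevant $\cE xt$-sheaves and hence the displayed inequality.

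The second and final step is the arithmetic. By definition $\lcdef(X) = \lcd(Y,X) - \codim_{Y}(X)$, and this quantity is independent of the chosen smooth ambient variety \cite{MuPo}, so we may use the bound above with $\codim_{Y}(X) = m-n$. Subtracting $m-n$ from both sides gives
$$\lcdef(X) \leq \max\{m-2k-3,\ m-n\} - (m-n) = \max\{(m-2k-3)-(m-n),\ 0\} = \max\{n-2k-3,\ 0\},$$
which is exactly the assertion.

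The main difficulty does not lie in the corollary, which is purely formal, but has already been absorbed into the preceding material: the genuine content is Theorem~\ref{depthThm2}, i.e.\ the bound $\depth(\Omp) \geq k+2$, and the index-chasing that converts depth estimates into the $\cE xt$-vanishing required by Theorem~\ref{MPthm}. The one place in the corollary itself deserving a word of care is the regime $n \leq 2k+2$: there $m - 2k - 3 < \codim_{Y}(X)$, so the bound reads $\lcd(Y,X) \leq \codim_{Y}(X)$, which forces $\lcdef(X) = 0$ (using that $\lcdef(X) \geq 0$ always), in agreement with the right-hand side of the stated inequality.
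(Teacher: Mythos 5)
Your proposal is correct and follows the paper's (implicit) argument exactly: the corollary is just the arithmetic $\max\{m-2k-3,\ m-n\} - (m-n) = \max\{n-2k-3,\ 0\}$ applied to the bound $\lcd(Y,X) \leq \max\{m-2k-3, \codim_{Y}(X)\}$ established in the preceding paragraph via the depth estimates and the Musta\c{t}\u{a}--Popa criterion. Your remark on the regime $n \leq 2k+2$ (where $\lcdef(X)=0$ since $\lcdef(X)\geq 0$ always) is a correct and sensible sanity check.
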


\begin{rmk}
    The inequality above holds as long as $X$ satisfies the $(*)_{k}$ (or $HRH(X) \geq k$) condition  \cite[Corollary 7.5]{ParkPopa}. 
\end{rmk}

\begin{cor}
    If $X$ has pre-$(k-1)$-rational singularities and the map $$\cH^{i}(\underline{\Omega}^{k}_{X}) \rightarrow \cH^{i}(I\underline{\Omega}^{k}_{X})$$
    is an isomorphism for $ k<i$ and surjective for $i = k,$ then 
     $$\lcdef(X) \leq \max \{ n-2k-3, 0 \}.$$
\end{cor}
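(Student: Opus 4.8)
The plan is to reduce the statement directly to the two immediately preceding results, namely Corollary \ref{*k+1} and the Park--Popa bound recorded in the Remark following the corollary of \cite{ParkPopa}. The point is that the hypotheses here are, after an index shift, exactly the input of Corollary \ref{*k+1}, and its output $(*)_{k}$ is exactly the input of the general form of the Park--Popa inequality.

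First I would apply Corollary \ref{*k+1} with the parameter $k$ there replaced by $k-1$. By assumption $X$ has pre-$(k-1)$-rational singularities, and the natural map $\cH^{i}(\underline{\Omega}^{k}_{X}) \to \cH^{i}(I\underline{\Omega}^{k}_{X})$ is an isomorphism for $i > k$ and surjective for $i = k$; these are precisely the hypotheses of Corollary \ref{*k+1} for that shifted value, so I conclude that $X$ satisfies the $(*)_{k}$ condition, i.e.\ $\phi_{p}\colon \underline{\Omega}^{p}_{X} \to I\underline{\Omega}^{p}_{X}$ is a quasi-isomorphism for $0 \leq p \leq k$ (equivalently $HRH(X) \geq k$ in the terminology of Remark \ref{Rmk1}).

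Second, I would invoke the fact, stated in the Remark after the corollary of \cite{ParkPopa} and due to \cite[Corollary 7.5]{ParkPopa}, that the bound $\lcdef(X) \leq \max\{n - 2k - 3, 0\}$ holds for any (irreducible, normal, quasi-projective) $X$ satisfying $(*)_{k}$. Chaining the two steps yields the claimed inequality. As a variant worth noting, one could instead re-run the argument of Section 3 verbatim with ``pre-$k$-rational'' weakened to ``$(*)_{k}$'': Theorem \ref{HodgeThm} already only needs $(*)_{k}$ by Remark \ref{Rmk1}, the rational-singularities input of Theorem \ref{depthThm2} comes from pre-$(k-1)$-rationality (which we have, since $k \geq 1$), and the remaining depth computations then reproduce the Ext-vanishing feeding Theorem \ref{MPthm}.

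There is essentially no obstacle here: the only thing to check is bookkeeping, namely that the isomorphism/surjectivity ranges in the hypothesis match those in Corollary \ref{*k+1} after the shift $k \mapsto k-1$, and that one cites the $(*)_{k}$ version of the local cohomological defect bound rather than the pre-$k$-rational version. (The degenerate case $k = 0$, where ``pre-$(-1)$-rational'' is vacuous, should either be excluded or handled directly, since then the statement reduces to showing that the map hypothesis on $\underline{\Omega}^{0}_{X}$ already gives $(*)_{0}$.)
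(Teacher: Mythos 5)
Your proof is correct and is exactly the paper's argument: the paper's proof reads ``This follows from Corollary \ref{*k+1} and \cite[Corollary 7.5]{ParkPopa},'' i.e.\ apply Corollary \ref{*k+1} with the index shifted by one to get the $(*)_{k}$ condition, then invoke the Park--Popa bound for $(*)_{k}$. Your additional remarks on the index bookkeeping and the $k=0$ edge case are sensible but not part of the paper's (terser) proof.
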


\begin{proof}
    This follows from Corollary \ref{*k+1} and \cite[Corollary 7.5]{ParkPopa}.
\end{proof}

\printbibliography
\end{document}